    \relax
    \documentclass[letterpaper]{article}
    \usepackage{aaai20}
    \usepackage{dsfont}
    \usepackage{times}
    \usepackage{helvet}
    \usepackage{courier}
    \usepackage{url}
    \usepackage{graphicx}
    \frenchspacing
    \setlength{\pdfpagewidth}{8.5in}
    \setlength{\pdfpageheight}{11in}
    \setcounter{secnumdepth}{0} 
 \pdfinfo{
/Title (On the Convergence of Model Free Learning in Mean Field Games)
/Author (Romuald Elie, Julien Perolat, Mathieu Lauriere, Matthieu Geist, Olivier Pietquin)
} 

\setcounter{secnumdepth}{2} 

%
\setlength\titlebox{2.5in} 

\newcommand{\citet}[1]{\citeauthor{#1} \shortcite{#1}}
\newcommand{\citep}{\cite}


\usepackage{amsfonts}       
\usepackage{amsmath}
 \usepackage{amsthm}
\usepackage[linesnumbered,ruled,vlined]{algorithm2e}
\newtheorem{theorem}{Theorem}
\newtheorem{lemma}[theorem]{Lemma}

\newtheorem{definition}[theorem]{Definition}

\newtheorem{assumption}{Assumption}
\newtheorem{corollary}[theorem]{Corollary}

\def \E{\mathbb{E}}
\def \J{\mathbb{J}}
\def \PP{\mathbb{P}}
\def \R{\mathbb{R}}

\def \cA{\mathcal{A}}

\def \cP{\mathcal{P}}

\def \cX{\mathcal{X}}

\def\reff#1{{\rm(\ref{#1})}}
\def\be{\begin{eqnarray}}
\def\ee{\end{eqnarray}}
\def\be*{\begin{eqnarray*}}
\def\ee*{\end{eqnarray*}}
\def\beq{\begin{equation}}
\def\eeq{\end{equation}}

\def \Tb{\mathbb{T}}

\def \eps{\mathbb{\epsilon}}

\title{
On the Convergence of Model Free Learning in Mean Field Games
}

\author{
Romuald Elie,\textsuperscript{\rm 1}
Julien P{\'e}rolat,\textsuperscript{\rm 2}
Mathieu Lauri{\`e}re,\textsuperscript{\rm 3}
Matthieu Geist,\textsuperscript{\rm 4}
Olivier Pietquin\textsuperscript{\rm 4}
\\
\textsuperscript{\rm 1}Universit{\'e} Paris-Est,
\textsuperscript{\rm 2}Deepmind\\
\textsuperscript{\rm 3}ORFE, Princeton University,
\textsuperscript{\rm 4}Google Research, Brain Team\\
romuald.elie@univ-mlv.fr, perolat@google.com, lauriere@princeton.edu, 
mfgeist@google.com, pietquin@google.com
}

\begin{document}

\maketitle

\begin{abstract}
Learning by experience in Multi-Agent Systems (MAS) is a  difficult and exciting task, due to the lack of stationarity of the environment, whose dynamics evolves as the population learns. In order to design scalable algorithms for systems with a large population of interacting agents (\textit{e.g.}, swarms), this paper focuses on Mean Field MAS, where the number of agents is asymptotically infinite.
Recently, a very active burgeoning field studies the effects of diverse reinforcement learning algorithms for agents with no prior information on a stationary Mean Field Game (MFG) and learn their policy through repeated experience. We adopt a high perspective on this problem and analyze in full generality  the  convergence of a fictitious iterative scheme using any single agent learning algorithm at each step. We quantify the quality of the computed approximate Nash equilibrium, in terms of the accumulated errors arising at each learning iteration step.
Notably, we show for the first time convergence of model free learning algorithms towards non-stationary MFG equilibria, relying only on classical assumptions on the MFG dynamics. We illustrate our theoretical results with a numerical experiment in a continuous action-space environment, where the approximate best response of the iterative fictitious play scheme is computed with a deep RL algorithm.
\end{abstract}

\section{Introduction}

In Multi-agent systems (MAS), several autonomous robots or agents interact and cooperate, compete or coordinate in order to complete their task. The difficult nature of the task at hand combined with the large number of possible situations imply that the agents have to learn by experience. In comparison to the single-agent case, the derivation of efficient learning algorithms in this context is difficult due to the lack of stationarity of the environment, whose dynamics evolves as the population learns  \citep{bu2008comprehensive}. This gives rise to research topics lying at the intersection of game theory and reinforcement learning. Nevertheless, in typical examples, the number of interacting agents can be very large (\textit{e.g.}, swarm systems) and defies the scalability properties of most learning algorithms. For anonymous identical  agents, a key simplification in game theory is the introduction of the asymptotic limit where the number of agents is infinite, leading to the modeling intuition behind the theory of Mean Field Games (MFG). This calls for an analysis of model free learning scheme for MAS in terms of MFG.  

MFG were introduced by~\citeauthor{MR2269875} \shortcite{MR2269875,MR2271747} and~\citet{MR2346927-HuangCainesMalhame-2006-closedLoop} in order to model the dynamic equilibrium between a large number of anonymous identical agents in interactions. Such systems encompass the modeling of numerous applications such as traffic jam dynamics, swarm systems, financial market equilibrium, crowd evacuation, smart grid control, web advertising auction, vaccination dynamics,  rumor spreading on social media, among others. In a sequential game theory setting, each player needs to take into account his impact on the strategies of the other players. Studying games with an infinite number of players is easier from this point of view, as the impact of one single player on the others can be neglected. Hereby, the asymptotic limit with infinite population size considered in MFG becomes highly relevant. A solution to a dynamic MFG is determined via the optimal policy of a representative agent in response to the flow of the entire population. A mean field (MF) Nash equilibrium arises when the distribution of the best response policies over the population generates the exact same population flow. In most cases, a MF Nash equilibrium provides an approximate Nash equilibrium for an analogous game with a finite number of players~\citep{Cardaliaguet-2013-notes,MR3134900,carmona2018probabilisticI-II}. 

 In the abundant literature on MFG, most papers consider planning problems with fully informed agents about the game operation scheme, the reward function and the MF population dynamics. Only a few contributions focus on learning problems in MFG, see e.g.~\citep{yin2010learning,cardaliaguet2018mean,Hadikhanloo-phdthesis,hadikhanloo2019finite} for model based approaches. Very recently, a rapidly growing literature intends to approximate the solution of stationary MFG in the realistic setting where agents with no prior information on the game learn their best response policy through repeated experience. These contributions restrict to a stationary setting and focus on specific Reinforcement Learning (RL) algorithms:   Q-learning \citep{guo2019learning,yang2018mean}, fictitious play \citep{mguni2018decentralisedli} or policy gradient methods \citep{subramanianpolicy}, 
 and sometimes rely  on hardly verifiable assumptions. 
 
 In this paper, we take a step back and adopt a general high perspective on the convergence of model free learning algorithms in possibly non-stationary MFG and emphasize their potential for MAS with a large number of agents. Our approach investigates how any single-agent learning algorithm can perform in an MFG setting, in order to learn a (possibly approximate) Nash equilibrium, via repeated experiences and without any prior knowledge. Namely, we quantify precisely how the convergence of model free iterative learning algorithms reduces to the error analysis of each learning iteration step, in analogy with how the convergence of RL algorithms reduces to the aggregation of repeated supervised learning approximation errors \citep{farahmand2010error,scherrer2015approximate}.
For this purpose, our approach relies on a model free Fictitious Play (FP) iterative learning scheme for repeated games, where each agent calibrates its belief to the empirical frequency of the previously observed population flows. 
The FP approach is very natural when agents are trying to learn how to play a game by experience, while interacting with others. Before a new round of experience, they need to anticipate the behavior of the other players, and FP ergodic averaging is nicely designed  for this purpose. This algorithm is typically useful for building from experience collaboration a cooperation patterns in a MAS using a decentralized learning scheme. In our framework of interest, all agents are identical (as usual in MFG), and we consistently suppose that they use the same learning scheme.  
Whenever the agents can compute their exact best response to any population flow, FP is proved to reach asymptotically a Nash equilibrium in some (but not all~\citep{shapley1964some}) classes of games, such as  first order monotone MFG~\citep{Hadikhanloo-phdthesis}. However, in a realistic setting, the agents are not able to compute their exact best response and can only attain an approximate version of it. This induces at each iteration a learning approximation error, which propagates through the FP learning scheme. 
    
The main contribution of this paper is theoretical, as we provide a  rigorous study of the error propagation in Approximate FP algorithms for MFGs, using an innovative line of proof in comparison to the standard two time scale approximation convergence results \citep{leslie2006generalised,borkar1997stochastic}. Our convergence results are derived under easily verifiable assumptions on possibly non-stationary MFG dynamics and cost, which are highly classical in the MFG literature  (namely $1^{st}$ order monotone MFGs). This allows discussing the convergence to a (possibly approximate) MF Nash equilibrium, when using any standard single-agent learning algorithm as an inner step embedded in a FP iterative scheme. Especially, our theoretical framework encompasses the convergence of RL algorithms to MFG equilibria in non stationary settings, which, as far as we know, is new in the literature. 
We illustrate our theoretical results on an authorative MFG numerical experiment on crowd congestion, where the approximate best response of the iterative FP scheme is computed with a deep RL algorithm. This provides for the first time a model free learning example on MFG in a continuous state-action environment.

\section{Background}

\paragraph{Mean Field Games.}
MFGs were introduced by ~\citeauthor{MR2269875} \shortcite{MR2269875,MR2271747} and by~\citet{MR2346927-HuangCainesMalhame-2006-closedLoop} and correspond to the asymptotic limit of a differential game, where the number of agents is infinite. Since all agents are assumed to be identical and indistinguishable, individual interactions are irrelevant in the limit and only the distribution of states matters (see ~\citep{carmona2018probabilisticI-II} for a complete overview). Most of the MFG literature is displayed in continuous time, but we choose to present our analysis in a discrete time setting in order to alleviate the presentation and emphasize the fruitful connections with the learning literature.

Finding a Mean Field Nash equilibrium boils down to identifying the equilibrium distribution dynamics of the population as well as the best response (or optimal policy) of a representative agent to this population mean field flow. Since the number of players is infinite, each agent has an infinitesimal influence on the population distribution. Yet, since all agents are rational, at equilibrium the state distribution generated by the optimal policy must coincide with the population distribution.

\noindent\textbf{Notations.} 
Let $\cX$ and $\cA$ be compact convex subsets of $\R^\ell$ and $\R^d$ respectively, which represent the state and action spaces common to every agent. Let $T>0$ be a time horizon and let $\Tb$ denote the time sequence $\{0,1,\dots,T\}$. We denote by $\cP(\cX)$ the set of probability measures on $\cX$ and by $M_T = \cP(\cX)^{T}$ the set of all possible flows of population state distributions $\mu=(\mu_0,\mu_1,...,\mu_T)$. The initial distribution of the population is an atomless measure on $\cX$ denoted by $\mu_0$. For $\mu \in M_T$, $\mu_t$ represents the distribution at time $t$ of the state occupation of the entire population. 

\noindent\textbf{State dynamics $\&$ Mean field population flow.} 
At any time $t\in\Tb$, each agent belongs to a state $x_t\in\cX$ and picks an action $a_t\in\cA$. For a sequence of actions $a:=(a_t)_{t\in\Tb}$, the dynamics of $x$ is governed by a Markov Decision Process (MDP) with (possibly non-stationary) transition density $p_t(.|x_t, a_t, \mu_t)$ parameterized by the mean field flow $\mu\in M_T$ of the population. This indexation transcribes the interactions with the other agents, through their state distribution $\mu_t$. Typically, the dynamics of $x$ is described by an equation of the form
\begin{equation}
\label{eq:dyn-general}
    x_{t+1} = x_t + b(x_t, a_t, \mu_t) + \epsilon_{t+1},
\end{equation}
where $b: \cX \times \cA \times \cP(\cX) \to \cX$ is a drift function and $(\epsilon_t)_{t \ge 1}$ is a dynamic source of noise. We stress that the mean field term $\mu_t$ represents the whole population distribution and not just the average state,  as e.g. in~\cite{yang2018mean}.

We denote by $\Pi_T$ the set of policies (or controls) $\pi: \Tb \times \cX \to \cA$ which are feedback in the state: at time $t$, an agent using policy $\pi$ while in state $x_t$ plays the action $a_t=\pi(t,x_t)$. 
The process $x$ controlled by $\pi\in\Pi_T$ is denoted $x^\pi$.

\noindent\textbf{Agent's reward scheme.} 
An infinitesimal agent starting at time $0$ in state $x_0\in\cX$  chooses a policy $\pi\in\Pi_T$ in order to  maximize the following discounted expected sum of rewards: 
\begin{equation}
\label{eq:def-general-J}
    J(x_0,\pi,\mu) := 
    \E\left[ \sum_{t = 0}^{T-1} \gamma^t r(x^{\pi}_t,\mu_t,a_t)  \right]\;, 
\end{equation}
while interacting with the population MF flow $\mu\in M_T$. At time $t$, the agent's rewards are impacted by $\mu_t$, which represents the aggregate state distribution of all the other agents (\textit{i.e.} of the whole population). Since the agents are anonymous, only the MF distribution flow $\mu$ of the states matters. 

As $\mu_0$ denotes the state distribution at time $0$, the average reward for a representative agent is given by
\begin{equation}
\label{eq:def-JJ}
\begin{split}
    \J(\pi, \mu) 
    &:= \E_{x_0 \sim \mu_0}[J(x_0, \pi, \mu)] \,,
\end{split}
\end{equation}
when this agent uses policy $\pi\in\Pi_T$, while the mean field population flow is $\mu$. 

\begin{definition}[Best response]
\label{def:BR}
A policy $\pi^*$ maximizing $\J(.,\mu)$ is called a best response of the representative agent to the MF population dynamic flow $\mu$. 
\end{definition}

\noindent\textbf{MF Nash Equilibrium.} 
While interacting, the agents may or may not reach a Nash equilibrium, whose definition, based on the previous best response policy characterization, reads as follows:

\begin{definition}[Mean Field Nash equilibrium]
\label{eq:def-NE-MFG}
A pair $(\pi^*_t, \mu^*_t)_{t\in \Tb}$ consisting of a policy and a MF population distribution flow is called MF Nash equilibrium if it satisfies
\begin{itemize}
    \item  \textbf{Agent rationality:}  $\pi^*$ is a best response to $\mu^*$;
    \item \textbf{Population consistency:} for all $t \in \Tb$, $\mu^*_t$ is the distribution of $x^*_t$, starting with distribution $\mu_0$ and controlled by policy $\pi^*$.
\end{itemize}
\end{definition}

Namely, if the mean field population flow is $\mu^*$, the policy $\pi^*$ is optimal, and if all the agents play according to $\pi^*$, the induced mean field population flow coincides with $\mu^*$. Hereby, $(\pi^*,\mu^*)$ identifies to an MF Nash equilibrium. 

Observe that reaching an MF Nash equilibrium requires the  computation of the exact best response policy, which can be difficult in practice. We are concerned with the design of an iterative learning scheme, where the available best response is partially accurate and typically approximated by RL through repeated experiences. For example, this realistic situation arises when agents are repeatedly optimizing their daily driving trajectories, without any prior information on the traffic jam dynamics. 

\section{Fictitious Play Algorithms for MFG}

 Fictitious play~\citep{robinson1951iterative} is an iterative learning scheme for repeated games, where each agent calibrates its belief to the empirical frequency of previously observed strategies of other agents, and plays optimally according to its beliefs. This constitutes its best response. Even in simple two-player games, the convergence of FP  to a Nash equilibrium is not guaranteed~\citep{shapley1964some}. However, the convergence of FP has recently been proved for some classes of MFG \citep{Hadikhanloo-phdthesis,cardaliaguet2017learning}.

 Yet, in most cases, agents do not have access to the exact best response policy $\pi^*$ but use  an  approximate version of    it instead, in the spirit of \citep{leslie2006generalised,perolat2018actor}. 
 At iteration $n$, the agent has only access to an approximate version  $\hat\pi^{n+1}$ of the best response  $\pi^{*,n+1}$ to the anticipated mean field flow $\bar\mu^{(n)}$, defined precisely in Algorithm~\ref{algo:AFP-MFG-2}. 
 \\
  At iteration step $n$, the learning scheme induces an average additional error $\ell_n$  defined as
  \begin{equation}\label{Error_Learning}
 \ell_n \;:=\;  \J(\pi^{*,(n+1)}, \bar\mu^{(n)}) - \J(\hat\pi^{(n+1)}, \bar\mu^{(n)})
 \ge 0\;,
 \end{equation}
 for $n\in\mathbb{N}$. 
 Observe that $\ell_n$ identifies to the expected loss over the entire population at step $n$, when replacing the the exact best response $\pi^{*,(n+1)}$ by the approximate policy $\hat\pi^{(n+1)}$.\\ In Section~\ref{Section_First_Order} below, we quantify the propagation of approximating errors $\ell_n$ and clarify the convergence properties of Algorithm~\ref{algo:AFP-MFG-2} for any type of learning procedure at each intermediate step. The specific setting where the approximate optimal policy is computed using single-agent RL algorithms is discussed in Sec~\ref{Section_Discussion_RL}.

\begin{algorithm}
\DontPrintSemicolon
\KwData{An initial distribution $\mu_0$; an initial flow of distributions $\bar{\mu}^{(0)}$; a number of steps $N$.}
\KwResult{A flow of distribution $\mu$ and a  policy $\pi$.}
\Begin{
  \For{$n = 0, 1, \dots N-1$}{
    Compute $\hat\pi^{(n+1)}$, the approximate response policy against $\bar \mu^{(n)}$\; 
    Compute $\hat \mu^{(n+1)}$, the mean field flow associated to $\hat\pi^{(n+1)}$ starting with $\hat \mu^{(n+1)}_0 =\mu_0$\;
    Update $\bar \pi^{(n+1)}$, the uniformly randomized policy over $(\hat\pi^{(k)})_{k=1,\ldots,n+1}$   \;    %
     Update  $\bar \mu^{(n+1)} =\frac{n}{n+1}\bar \mu^{(n)} + \frac{1}{n+1}\hat \mu^{(n+1)}$  } \;
    
  \KwRet{$(\bar\mu^{(N)},\bar\pi^{(N)})$}
  }
\caption{Approximate Fictitious Play for MFG}
\label{algo:AFP-MFG-2}
\end{algorithm}

\paragraph{Approximate Nash equilibrium} 
 At each step $n$, we denote by $\bar\pi^{(n)}$ the representative agent belief on the aggregate population policy, defined as an equally randomized version of all previous approximate best responses $(\hat\pi^{(k)})_{k=1,\ldots,n}$: for each $t \in \Tb$ and $x \in\cX$,
    $\bar\pi^{(n)}(t,x) \in \cP(\cA)$ is the probability distribution on the set of actions $\cA$ according to which the player picks uniformly at random an element of $\{\hat\pi^{(1)}(t,x), \dots, \hat\pi^{(n)}(t,x)\}$.\\ With a slight abuse of notation, we write
    $$
        J(x_0, \bar\pi^{(n)}, \bar\mu^{(n)}) := \frac{1}{n} \sum_{k=1}^{n} J(x_0, \hat\pi^{(k)}, \bar\mu^{(n)})\;, \quad n\in\mathbb{N}\;.
    $$
and modify the definition of $\J$ in \reff{eq:def-JJ} accordingly.
 Observe for later use that, by construction,  $\bar\mu^{(n)}$ defined in Algorithm~\ref{algo:AFP-MFG-2} coincides with the population MF flow induced by the policy $\bar\pi^{(n)}$. 
In order to assess the quality of  $(\bar\mu^{(n)},\bar\pi^{(n)})$ as an (approximate) MF Nash equilibrium, we introduce, for $n\in\mathbb{N}$,
 \be*
    e_n 
    &:=& \J(\pi^{*,(n+1)}, \bar\mu^{(n)})-\J(\bar\pi^{(n)}, \bar\mu^{(n)}) \,\ge\, 0\;.
 \ee*
 The exploitability $e_n$ quantifies at iteration $n$ the expected gain for a typical agent, when shifting its belief $\bar\pi^{(n)}$ to the exact best response  $\pi^{*,(n+1)}$, while interacting with the MF population flow $\bar\mu^{(n)}$. After $n$ iterations in Algorithm \ref{algo:AFP-MFG-2}, $e_n$ is a quantitative measure of the quality of $(\bar\mu^{(n)},\bar\pi^{(n)})$ as an MF Nash equilibrium. For the sake of clarification, let us introduce a more precise weaker notion of MF Nash equilibrium,  inspired by~\citep{carmona2004nash}. 

\begin{definition}[Approximate MF Nash equilibrium]
\label{def:eps-delta-NE}
    For $\epsilon >0$ and $\delta \in (0,1)$, a pair $(\pi^*_{\epsilon,\delta}, \mu^*_{\epsilon,\delta})$ consisting of a policy and a population distribution flow is called an $(\epsilon,\delta)-$MF Nash equilibrium if
    $$
        \mu_0\left( \left\{ x_0  \,;\, J(x_0, \pi^*_{\epsilon,\delta}, \mu^*_{\epsilon,\delta}) \geq J(x_0, \pi', \mu^*_{\epsilon,\delta}) - \epsilon, \forall \pi'\right\}\right) 
    $$
    is at least $1- \delta$, 
    and $\mu^*_{\epsilon,\delta}$ coincides with the MF distribution flow  starting from $\mu_0$, when every agent uses policy $\pi^*_{\epsilon,\delta}$.
\end{definition}

An $(\epsilon,\delta)-$MF Nash equilibrium identifies to a weak equilibrium which reveals $\epsilon$-optimal for at least a fraction $(1-\delta)$ of the population. 
We are now in position to clarify how the exploitability $e_n$  quantifies the quality of $(\bar\mu^{(n)},\bar\pi^{(n)})$  as an MF Nash equilibrium.
 
\begin{theorem}\label{thm_approximate_Nash}
If $0\le e_n\le \epsilon^2$ for some $n\in\mathbb{N}$, 
then 
$(\bar\mu^{(n)},\bar\pi^{(n)})$ is an $(\epsilon, \epsilon)$-MF Nash equilibrium in the sense of Definition~\ref{def:eps-delta-NE}. 
If $e_n$ goes to 0 as $n \to +\infty$, any accumulation point of $\bar\mu^{(n)}$ is a MF Nash equilibrium
\end{theorem}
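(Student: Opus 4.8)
The plan is to treat the two assertions separately: the first is a one-line consequence of Markov's inequality, while the second requires a compactness-plus-continuity argument.

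For the first assertion, I would begin by recording the standard dynamic-programming fact that the best response to the frozen flow $\bar\mu^{(n)}$ is optimal simultaneously for every initial state, i.e. $J(x_0,\pi^{*,(n+1)},\bar\mu^{(n)})=\sup_{\pi'}J(x_0,\pi',\bar\mu^{(n)})$ for $\mu_0$-almost every $x_0$. I would then introduce the pointwise gain
$$g(x_0):=J(x_0,\pi^{*,(n+1)},\bar\mu^{(n)})-J(x_0,\bar\pi^{(n)},\bar\mu^{(n)})\ge 0,$$
whose $\mu_0$-average is exactly $e_n$. Markov's inequality together with $e_n\le\epsilon^2$ yields $\mu_0(\{g>\epsilon\})\le e_n/\epsilon\le\epsilon$, so that on a set of $\mu_0$-measure at least $1-\epsilon$ one has $J(x_0,\bar\pi^{(n)},\bar\mu^{(n)})\ge\sup_{\pi'}J(x_0,\pi',\bar\mu^{(n)})-\epsilon$. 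Since, by construction, $\bar\mu^{(n)}$ is the flow induced by $\bar\pi^{(n)}$, the population-consistency requirement holds exactly, and both conditions of Definition~\ref{def:eps-delta-NE} are met with $\delta=\epsilon$.

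For the second assertion, I would first note that $M_T=\cP(\cX)^{T}$ is compact for the product weak topology (as $\cX$ is compact), so accumulation points of $(\bar\mu^{(n)})$ exist; fix a subsequence with $\bar\mu^{(n_k)}\to\mu^\infty$. Writing $e_n=V^*(\bar\mu^{(n)})-\J(\bar\pi^{(n)},\bar\mu^{(n)})$ with $V^*(\mu):=\sup_\pi\J(\pi,\mu)$, I would establish two continuity statements under the standing Lipschitz assumptions on the drift $b$, the reward $r$ and the transition kernel: (i) $\mu\mapsto V^*(\mu)$ is continuous, so $V^*(\bar\mu^{(n_k)})\to V^*(\mu^\infty)$; and (ii) the closed-loop map sending a policy to its induced flow and to its achieved value is continuous in a suitable topology, allowing me to extract a limit policy $\pi^\infty$ with induced flow equal to $\mu^\infty$ and with $\J(\bar\pi^{(n_k)},\bar\mu^{(n_k)})\to\J(\pi^\infty,\mu^\infty)$. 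Passing to the limit in $e_{n_k}\to 0$ would then give $V^*(\mu^\infty)=\J(\pi^\infty,\mu^\infty)$, i.e. $\pi^\infty$ is a best response to $\mu^\infty$ whose induced flow is $\mu^\infty$, which is exactly the statement that $(\pi^\infty,\mu^\infty)$ is an MF Nash equilibrium in the sense of Definition~\ref{eq:def-NE-MFG}.

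The main obstacle is step (ii): producing the limit policy $\pi^\infty$ and ensuring that both flow consistency and the achieved value survive the passage to the limit. Since policies live in an infinite-dimensional space, a convergent subsequence of $(\bar\pi^{(n_k)})$ need not exist in a topology strong enough to control the induced flow. I would circumvent this by passing through relaxed (measure-valued) controls, where compactness is available, and then use continuity of the state dynamics and of the reward in the flow variable to transfer weak convergence of the controls into convergence of the induced flows and of $\J$. Monotonicity of the MFG can be invoked at the very end to identify $\mu^\infty$ with the unique equilibrium flow, but it is not required merely to certify that every accumulation point is an equilibrium.
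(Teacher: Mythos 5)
Your proof of the first assertion is correct and follows essentially the same route as the paper: introduce the pointwise gain $\varphi(x_0)=J(x_0,\pi^{*,(n+1)},\bar\mu^{(n)})-J(x_0,\bar\pi^{(n)},\bar\mu^{(n)})\ge 0$, whose $\mu_0$-average is $e_n$, apply Markov's inequality to get $\mu_0(\{\varphi\ge\epsilon\})\le e_n/\epsilon\le\epsilon$, and use the fact that $\bar\mu^{(n)}$ is by construction the flow induced by $\bar\pi^{(n)}$ to get population consistency. You add one step the paper leaves implicit, and it is a genuine improvement in rigor: Definition~\ref{def:eps-delta-NE} quantifies over \emph{all} competitor policies $\pi'$, whereas the best response $\pi^{*,(n+1)}$ is defined as a maximizer of the $\mu_0$-averaged criterion $\J(\cdot,\bar\mu^{(n)})$; your explicit appeal to the dynamic-programming fact that the optimal feedback policy is optimal simultaneously from ($\mu_0$-almost) every initial state $x_0$ is exactly what closes this quantifier gap, and the paper uses it silently.

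For the second assertion the comparison is lopsided: the paper's proof consists of the single sentence ``the second part of the theorem follows directly,'' with the compactness machinery only gestured at later (after Corollary~\ref{cor:main_2}, via tightness and pre-compactness in Wasserstein distance, citing Remark~3.5.3 of the Hadikhanloo thesis). Your sketch --- compactness of $\cP(\cX)^T$ for the weak topology, continuity of $V^*(\mu)=\sup_\pi\J(\pi,\mu)$, and passage through relaxed (measure-valued) controls to extract a limit policy $\pi^\infty$ whose induced flow is $\mu^\infty$ and whose value survives the limit --- is precisely the argument the paper is deferring to, and you correctly identify the hard point (your step (ii)): mere weak convergence of $\bar\mu^{(n_k)}$ gives no convergent subsequence of policies in a topology strong enough to control induced flows, which is why the relaxed-control compactification is needed. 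One caveat you should make explicit: Theorem~\ref{thm_approximate_Nash} as stated carries no hypotheses, but your continuity claims (i) and (ii) are not free --- they require regularity of the reward in $(x,\mu)$ of the kind provided by Assumption~\ref{Assumption_reward} in the first-order setting of Section~\ref{Section_First_Order}. You invoke unspecified ``standing Lipschitz assumptions''; under the paper's actual Assumption~\ref{Assumption_reward} (continuity of $\bar r$ on the compact $\cX\times\cP(\cX)$, smooth strongly concave $\tilde r$, deterministic first-order dynamics) these continuity statements do hold, so your argument is sound there, but the theorem's second claim should be read as implicitly restricted to such a setting --- a limitation the paper's ``follows directly'' quietly papers over as well. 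Your closing remark is also accurate: monotonicity~\eqref{monotone} is needed only to identify the accumulation point with the \emph{unique} equilibrium flow, not to certify that every accumulation point is an equilibrium.
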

\begin{proof}
Fix $n\in\mathbb{N}$ and assume
    $0 \le
       e_n 
    \le \epsilon^2.$ 
    Let us introduce $$\varphi(x_0) := J(x_0, \pi^{*,(n+1)}, \bar\mu^{(n)})-J(x_0, \bar\pi^{(n)}, \bar\mu^{(n)})\,\ge 0,$$  
as $\pi^{*,(n+1)}$ is the best response to the MF flow $\bar\mu^{(n)}$.\\
Using Markov's inequality and the bound on $e_n$ we obtain
\begin{align*}
    \mu_0\left(\left\{ x_0 \in\cX \,:\, \varphi(x_0) \geq \epsilon \right\}\right)
    &=
    \PP_{x_0 \sim \mu_0}\left[ \varphi(x_0) \geq \epsilon\right] 
    \\
    &\leq 
    \frac{\E_{x_0 \sim \mu_0}[\varphi(x_0)]}{\epsilon} 
    \;
    = \; \frac{e_n}{\epsilon}\,, 
\end{align*}
which is smaller than $\epsilon$.
Collecting the terms and using the definition of $\varphi$, we deduce that
$$
     \mu_0\left(\left\{ x_0;  J(x_0, \bar\pi^{(n)}, \bar\mu^{(n)}) \geq J(x_0, \pi^{*,(n+1)}, \bar\mu^{(n)}) - \epsilon \right\}\right)
$$
is at least $1-\epsilon$, 
so that $(\bar\pi^{(n)}, \bar\mu^{(n)})$ is an $(\epsilon,\epsilon)$-MF Nash equilibrium.\\ The second part of the theorem follows directly. 
\end{proof}

\section{Error propagation \& Nash equilibrium approximation for first order MFG}\label{Section_First_Order}

 Since the exploitability $e_n$ identifies to a relevant quality measure of Algorithm~\ref{algo:AFP-MFG-2} after $n$ iterations, we now evaluate how the individual learning errors $(\ell_k)_{0\le k\le n}$ aggregate over $e_n$. For the sake of simplicity, we focus our discussion on $1^\text{st}$  order MFG, \textit{i.e.} without source of noise in the dynamics. This allows us to build our reasoning on the 
analysis of \cite[Chapter 3]{Hadikhanloo-phdthesis} and to avoid a limitative restriction to second order games with a potential structure, for which similar results should hold in that setting as well, see~\citet{cardaliaguet2017learning}.
 
\subsection{First order mean field game} \label{Section_FirstOrderMFG}

The state $(x_t)_t$ evolves in $\R^d$ with  dynamics~\eqref{eq:dyn-general}, where we take $b(x,a,\mu) = a$, and $(\epsilon_t)_t = 0$. In other words, each agent controls exactly its state variation between two time steps and does not endure any noise. 
While interacting with a MF flow $\mu \in M_T$, each agent intends to maximize the classical reward scheme given by~\eqref{eq:def-general-J}
with a running reward at time $t$ of the form: 
\begin{equation}
\label{eq:def-model-r}
    r(x^\pi_t, \mu_t, a_t) \mapsto \tilde r(x^\pi_t, a_t) + \bar r(x^\pi_t, \mu_t),
\end{equation}
where the extra $\bar r$ captures the impact of the other agents' positions. In Sec.~\ref{Section_Expe}, we provide in particular a congestion example where $\bar r$  models an appeal for non-crowded regions. This type of conditions translates into the so-called Lasry-Lions monotonicity condition~\citeauthor{MR2269875} \shortcite{MR2269875,MR2271747} which ensures uniqueness of MF Nash equilibrium. More precisely, existence and uniqueness of solution to the $1^\text{st}$ order MFG of interest hold under the following classical set of assumptions.

\begin{assumption}\label{Assumption_reward}
    For some constant $C$, the reward functions $\tilde r$ and $\bar r$ satisfy:
    \begin{itemize}
        \item For any $x\in\cX$, the map $\tilde r(x,.)$  is twice differentiable and 
$$
\frac{1}{C} I_d \le D_{aa} \tilde r(x,.) \le C I_d\;,
$$
 \item The function $\bar r$ is continuous on $: \cX\times\cP(\cX)$ and $\bar r(.,m)$ is $\mathcal{C}^1$ on $\cX$\;,
\item We have 
$$
 \|\tilde r(.,.)\|_\infty + \|\bar r(.,.)\|_\infty \le C\,,
$$
\item The Lasry-Lions monotonicity condition holds: for all $m_1,m_2 \in \mathcal{P}(\cX)$, 
 \beq\label{monotone} 
 \int_{\cX} \left[\bar r(.,m_1)-\bar r(.,m_2)\right] d[m_1 - m_2]< 0 \;.
\eeq
    \end{itemize}
\end{assumption}
\subsection{Error propagation in the Fictitious Play algorithm} \label{Sec_Error_Propagate}
We now investigate how the learning error $(\ell_n)_n$ propagates through FP for any learning algorithm, while Sec~\ref{Section_Discussion_RL} focuses on the specific case where the best response is approximated via RL.\\
The key ingredient of FP iterative learning schemes is the quick stabilization of the sequence of beliefs  $(\bar\mu^{(n)})_n$. 

\begin{lemma}\label{Lemma_close_FP}
Under Assumption~\ref{Assumption_reward}, the FP MF flow $\bar\mu^{(n)}$ satisfies: 
$$ 
    d_1(\bar\mu^{(n)}, \bar\mu^{(n+1)})\le \frac{C}{n}\;, \;\; n\in\mathbb{N}, \quad \mbox{for some  } C>0\,,
$$
where $d_1$ is the Wasserstein distance. 
\end{lemma}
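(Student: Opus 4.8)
The plan is to read off the convex-combination structure of the fictitious play update and combine it with a convexity property of the Wasserstein distance under mixing, after which the compactness of $\cX$ immediately closes the estimate; compactness is the only feature really used, Assumption~\ref{Assumption_reward} serving merely to guarantee that the flows $\hat\mu^{(n+1)}$ are well defined. First I would note that the update in Algorithm~\ref{algo:AFP-MFG-2} makes $\bar\mu^{(n)}$ and $\bar\mu^{(n+1)}$ share a common mass: for every $t\in\Tb$,
\[
\bar\mu^{(n+1)}_t=\frac{n}{n+1}\bar\mu^{(n)}_t+\frac{1}{n+1}\hat\mu^{(n+1)}_t,
\qquad
\bar\mu^{(n)}_t=\frac{n}{n+1}\bar\mu^{(n)}_t+\frac{1}{n+1}\bar\mu^{(n)}_t,
\]
so that only the two $\tfrac{1}{n+1}$-weighted tails $\hat\mu^{(n+1)}_t$ and $\bar\mu^{(n)}_t$ differ.

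The heart of the argument is the elementary bound that for probability measures $\alpha,\beta_1,\beta_2$ on $\cX$ and $\lambda\in[0,1]$,
\[
d_1\bigl((1-\lambda)\alpha+\lambda\beta_1,\ (1-\lambda)\alpha+\lambda\beta_2\bigr)\le \lambda\, d_1(\beta_1,\beta_2).
\]
I would prove this by exhibiting an admissible coupling rather than by any computation: glue the diagonal (identity) coupling of $\alpha$ with itself, weighted $1-\lambda$, to an optimal coupling of $(\beta_1,\beta_2)$, weighted $\lambda$. This is a valid transport plan between the two mixtures, and its cost equals $\lambda\, d_1(\beta_1,\beta_2)$ since the diagonal part moves no mass; the infimum defining $d_1$ then gives the inequality. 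Taking $\lambda=\tfrac{1}{n+1}$, $\alpha=\bar\mu^{(n)}_t$, $\beta_1=\hat\mu^{(n+1)}_t$ and $\beta_2=\bar\mu^{(n)}_t$ yields, for each $t$,
\[
d_1(\bar\mu^{(n+1)}_t,\bar\mu^{(n)}_t)\le \frac{1}{n+1}\, d_1(\hat\mu^{(n+1)}_t,\bar\mu^{(n)}_t).
\]

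To conclude I would bound the surviving distance by compactness: both $\hat\mu^{(n+1)}_t$ and $\bar\mu^{(n)}_t$ are supported in the compact set $\cX$, so every coupling lives on $\cX\times\cX$ and $d_1(\hat\mu^{(n+1)}_t,\bar\mu^{(n)}_t)\le \mathrm{diam}(\cX)=:C<\infty$. This gives $d_1(\bar\mu^{(n+1)}_t,\bar\mu^{(n)}_t)\le \tfrac{C}{n+1}\le\tfrac{C}{n}$ uniformly in $t$, hence the same bound (up to replacing $C$ by $(T+1)C$ if the flow distance aggregates by summation rather than by maximum over $t\in\Tb$) for the flow-level $d_1(\bar\mu^{(n)},\bar\mu^{(n+1)})$. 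I do not anticipate a genuine obstacle here; the only points deserving care are the clean verification that the glued plan is indeed a coupling of the two mixtures in the convexity estimate, and the observation—immediate from the construction of $\bar\mu^{(n+1)}$—that the very same convex combination holds simultaneously at every time slice $t$.
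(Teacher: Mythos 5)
Your proof is correct, and it rests on the same algebraic backbone as the paper's own argument --- the fictitious-play identity $\bar\mu^{(n+1)}-\bar\mu^{(n)}=\frac{1}{n+1}\bigl(\hat\mu^{(n+1)}-\bar\mu^{(n)}\bigr)$ followed by a diameter bound on the residual distance --- but you convert that identity into a Wasserstein estimate on the primal (coupling) side, whereas the paper works on the dual side. Concretely, the paper tests the signed measure $\bar\mu^{(n+1)}-\bar\mu^{(n)}$ against an arbitrary $h\in Lip_1(\cX^T,\R)$, recenters $h$ by subtracting $h(\mathbf{x}_0)$ for a fixed $\mathbf{x}_0\in\cX^T$ (legitimate because the two measures have equal total mass), bounds $|h(\mathbf{x})-h(\mathbf{x}_0)|\le\|\mathbf{x}-\mathbf{x}_0\|$ using boundedness of $\cX^T$, and takes the supremum over $h$, obtaining $d_1(\bar\mu^{(n+1)},\bar\mu^{(n)})\le C/(n+1)$ in two lines. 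Your glued transport plan --- the diagonal coupling of the common component $\frac{n}{n+1}\bar\mu^{(n)}$ with itself plus an optimal coupling of the tails --- instead establishes the mixing inequality $d_1\bigl((1-\lambda)\alpha+\lambda\beta_1,(1-\lambda)\alpha+\lambda\beta_2\bigr)\le\lambda\, d_1(\beta_1,\beta_2)$ and reaches the same bound with $\lambda=\frac{1}{n+1}$. What each approach buys: the dual computation is shorter and, since the signed-measure identity is exact and the Kantorovich--Rubinstein supremum is positively homogeneous, it in fact yields the \emph{equality} $d_1(\bar\mu^{(n+1)},\bar\mu^{(n)})=\frac{1}{n+1}\,d_1(\hat\mu^{(n+1)},\bar\mu^{(n)})$; your coupling argument avoids duality altogether, so it is more elementary (only the infimum-over-plans definition of $d_1$ is needed) and it generalizes verbatim to $W_p$ with $p>1$, where no Kantorovich--Rubinstein representation is available (there the glued plan gives the factor $\lambda^{1/p}$). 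Two further points of agreement are worth noting: like the paper, you correctly identify that Assumption~\ref{Assumption_reward} plays no quantitative role in Lemma~\ref{Lemma_close_FP} beyond well-posedness --- only compactness of the state space enters --- and your bookkeeping of the per-time-slice versus aggregated flow metric (the factor $T+1$ absorbed into $C$, and $C/(n+1)\le C/n$) is consistent with the paper's convention of working directly with measures on $\cX^T$, since the algorithm's convex-combination update holds simultaneously at every time slice.
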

The proof follows from a straightforward adaptation  of ~\cite[Lemma 3.3.2]{Hadikhanloo-phdthesis} to our setting. 

As the sequence of beliefs $\bar\mu_n$ stabilizes, the impact of recent learning errors $(\ell_n)$ reduces and we are in position to quantify the global error $e_n$ of the algorithm after $n$ iteration steps. This is the main result of the paper, whose proof interestingly differs from the more classical two-time scale approximation argument~\citep{borkar1997stochastic}. 
\begin{theorem}\label{Thm_error_1} Under  Assumption~\ref{Assumption_reward}, the Nash equilibrium quality $(e_n)_n$ 
satisfies both estimates: 
 for all $n \in \mathbb{N}$
\begin{align}
    e_n 
    & \le  
     \frac{C_1}{n}+\frac{C_1}{n}\sum_{i=1}^n d_1(\mu^{*,(i+1)},\hat\mu^{(i+1)}) + \frac{1}{n}\sum_{i=1}^n \ell_i, 
     \label{eq:convergence-inequality-2}
     \\
    e_n 
    & \le 
    \ell_n + \frac{C_2}{n}+ \frac{C_2}{n}\sum_{i=1}^n d_1(\hat\mu^{(i+1)},\hat\mu^{(i+2)}) + \sum_{i=1}^n \frac{i+1}{n} \ell_i, 
    \label{eq:convergence-inequality-1}
\end{align}
for some constants $C_1$ and $C_2$.
\end{theorem}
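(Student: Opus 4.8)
The plan is to exploit the separable reward structure \eqref{eq:def-model-r} in order to linearize the dependence of $\J$ on the agent's own state flow. Writing $m^\pi=(m^\pi_t)_{t\in\Tb}$ for the state flow generated by a policy $\pi$ started from $\mu_0$, the reward decomposes as
\[
\J(\pi,\mu) = g(\pi) + \sum_{t\in\Tb}\gamma^t\int_\cX \bar r(x,\mu_t)\,dm^\pi_t(x),
\]
where $g(\pi):=\E[\sum_t\gamma^t\tilde r(x^\pi_t,a_t)]$ depends on $\pi$ alone. The crucial observation is that the coupling term is \emph{linear} in the agent's flow $m^\pi$. Combined with the fictitious-play identity $\bar\mu^{(n)}=\frac1n\sum_{k=1}^n\hat\mu^{(k)}$ (which, as noted after Algorithm~\ref{algo:AFP-MFG-2}, also makes $\bar\mu^{(n)}$ the flow of $\bar\pi^{(n)}$) and the abuse of notation defining $\J(\bar\pi^{(n)},\bar\mu^{(n)})$, this lets me express $w_n:=\J(\bar\pi^{(n)},\bar\mu^{(n)})$ through the coupling evaluated on the diagonal at $\bar\mu^{(n)}$.

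Next I would derive a one-step recursion for $n\,w_n$. Substituting the update $\bar\mu^{(n+1)}=\frac{n}{n+1}\bar\mu^{(n)}+\frac1{n+1}\hat\mu^{(n+1)}$, using linearity of the coupling in its first argument and the identity $n\bar\mu^{(n)}+\hat\mu^{(n+1)}=(n+1)\bar\mu^{(n+1)}$, the cross terms collapse into a single coupling increment, giving
\[
(n+1)w_{n+1}-n\,w_n = \J(\pi^{*,(n+1)},\bar\mu^{(n)}) - \ell_n + (n+1)\Delta_n,
\]
where $\Delta_n=\sum_t\gamma^t\int_\cX[\bar r(x,\bar\mu^{(n+1)}_t)-\bar r(x,\bar\mu^{(n)}_t)]\,d\bar\mu^{(n+1)}_t(x)$ and I have inserted the definition \eqref{Error_Learning} of $\ell_n$ to replace $\hat\pi^{(n+1)}$ by $\pi^{*,(n+1)}$. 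The Lasry--Lions condition \eqref{monotone} is then applied to the part of $\Delta_n$ integrated against $\bar\mu^{(n+1)}_t-\bar\mu^{(n)}_t$, which is nonpositive; the leftover piece, integrated against $\bar\mu^{(n)}_t$, is controlled through the $\mathcal C^1$/Lipschitz regularity of $\bar r$ in its measure argument by $d_1(\bar\mu^{(n)},\bar\mu^{(n+1)})\le C/n$ from Lemma~\ref{Lemma_close_FP}.

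Telescoping this recursion and dividing by $n$ expresses $w_n$, hence $e_n=\J(\pi^{*,(n+1)},\bar\mu^{(n)})-w_n$, as a Ces\`aro average of the best-response values $\J(\pi^{*,(k+1)},\bar\mu^{(k)})$, minus the averaged learning errors and the accumulated coupling remainders. To reach \eqref{eq:convergence-inequality-2} I would bound the gap between the current best-response value and the earlier ones by invoking optimality of each $\pi^{*,(k+1)}$ together with Lipschitz stability of the best-response value in the flow, the latter resting on the uniform convexity $\frac1C I_d\le D_{aa}\tilde r\le C I_d$ of Assumption~\ref{Assumption_reward}. The flow-approximation terms $d_1(\mu^{*,(i+1)},\hat\mu^{(i+1)})$ enter precisely because, to legitimately use monotonicity and optimality, each coupling must be evaluated at the \emph{exact} best-response flow $\mu^{*,(i+1)}$, so every replacement of $\hat\mu^{(i+1)}$ by $\mu^{*,(i+1)}$ costs such a term. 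A second grouping of the same telescoped sum — keeping the algorithm's own iterates and comparing consecutive approximate best-response flows $\hat\mu^{(i+1)},\hat\mu^{(i+2)}$ instead of the exact ones — yields \eqref{eq:convergence-inequality-1}; the summation-by-parts this grouping requires reattaches the factor $(i+1)$ to each $\ell_i$, producing the reweighted sum $\sum_i\frac{i+1}{n}\ell_i$ together with the standalone $\ell_n$.

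The main obstacle is controlling the accumulation of the coupling remainders $(n+1)\Delta_n$: after multiplication by $n+1$ each is only of order $O(1)$, so a crude estimate would leave an $O(1)$ error surviving the division by $n$. The monotone term from \eqref{monotone} must therefore be used not merely for its sign but to cancel the leading part of this accumulation, with the residual genuinely reducing to the $d_1$-stability of the flows (Lemma~\ref{Lemma_close_FP}) and the regularity of the value in the mean field. Keeping careful track of which learning error $\ell_i$ and which flow discrepancy attaches to which index — the bookkeeping that distinguishes the uniform average in \eqref{eq:convergence-inequality-2} from the recency-weighted sum in \eqref{eq:convergence-inequality-1} — is the delicate remaining point.
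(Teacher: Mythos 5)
Your overall architecture---linearity of the coupling in the agent's own flow, a one-step recursion for an $n$-weighted quantity, Lasry--Lions monotonicity combined with the $C/n$ stability of Lemma~\ref{Lemma_close_FP}, then a discrete-Gronwall telescoping---is the same skeleton as the paper's proof, and your recursion $(n+1)w_{n+1}-n w_n=\J(\pi^{*,(n+1)},\bar\mu^{(n)})-\ell_n+(n+1)\Delta_n$ is algebraically correct. But there is a genuine gap, which you have located yourself without resolving: after your split
\begin{align*}
(n+1)\Delta_n
&= (n+1)\sum_{t}\gamma^t\int_\cX\bigl[\bar r(x,\bar\mu^{(n+1)}_t)-\bar r(x,\bar\mu^{(n)}_t)\bigr]\,d\bigl(\bar\mu^{(n+1)}_t-\bar\mu^{(n)}_t\bigr)(x)\\
&\quad+(n+1)\sum_{t}\gamma^t\int_\cX\bigl[\bar r(x,\bar\mu^{(n+1)}_t)-\bar r(x,\bar\mu^{(n)}_t)\bigr]\,d\bar\mu^{(n)}_t(x),
\end{align*}
the monotonicity condition \eqref{monotone} kills only the first line. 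The second line is a signed $O(1)$ term per step (even granting a bound $\sup_x|\bar r(x,m)-\bar r(x,m')|\le C d_1(m,m')$, which Assumption~\ref{Assumption_reward} does not actually provide---it gives only continuity in $m$ and $\mathcal{C}^1$ regularity in $x$), so telescoping accumulates $O(n)$ and division by $n$ leaves an $O(1)$ error, exactly as you fear in your closing paragraph. Hoping that the monotone part ``cancels the leading part of this accumulation'' cannot work as stated: once the split is made, the nonpositive term carries no quantitative information about the leftover integral against $\bar\mu^{(n)}_t$, and there is no reason for that leftover to be $o(1/n)$.

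The missing idea is the choice of tracked quantity. The paper runs the recursion not on $w_n=\J(\bar\pi^{(n)},\bar\mu^{(n)})$ but on the approximate exploitability $\hat e_n:=\J(\hat\pi^{(n+1)},\bar\mu^{(n)})-\J(\bar\pi^{(n)},\bar\mu^{(n)})=\int_{\cX^T}J_n\,d(\hat\mu^{(n+1)}-\bar\mu^{(n)})$, with $e_n=\ell_n+\hat e_n$, where $J_n$ is the path-space reward (well defined in the first-order setting since trajectories are in bijection with initial states and policies). With this re-centred quantity, the fictitious-play identities \eqref{eq:FP-rel-barmu-hatbar}, namely $\hat\mu^{(n+1)}-\bar\mu^{(n)}=(n+1)(\bar\mu^{(n+1)}-\bar\mu^{(n)})$ and $\hat\mu^{(n+1)}-\bar\mu^{(n+1)}=n(\bar\mu^{(n+1)}-\bar\mu^{(n)})$, make the dangerous terms align into the \emph{exact} monotone pairing
\begin{equation*}
(n+1)\hat e_{n+1}-n\hat e_n
= n(n+1)\int_{\cX^T}(J_{n+1}-J_n)\,d(\bar\mu^{(n+1)}-\bar\mu^{(n)})
+(n+1)\int_{\cX^T}J_{n+1}\,d(\hat\mu^{(n+2)}-\hat\mu^{(n+1)}),
\end{equation*}
where the reward increment is integrated against the \emph{same} measure increment, so \eqref{monotone} annihilates it with zero remainder---no leftover against $\bar\mu^{(n)}$ ever appears. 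The surviving term is then handled by the mixed regularity estimate \eqref{eq:Jn-lip-property} ($J_{n+1}-J_n$ is $C/n$-Lipschitz in $x$ by Lemma~\ref{Lemma_close_FP}; note this is weaker than, and different from, Lipschitz continuity of $\bar r$ in $m$), the optimality of $\pi^{*,(n+1)}$ and the definition of $\ell_n$, yielding $(n+1)\hat e_{n+1}-n\hat e_n\le(n+1)\ell_n+C\,d_1(\hat\mu^{(n+2)},\hat\mu^{(n+1)})$; Lemma~\ref{lem:useful-bound-sequence} then gives \eqref{eq:convergence-inequality-1}---the factor $(i+1)$ on $\ell_i$ comes directly from this recursion, no summation by parts is needed. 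The analogous recursion on $e_n$ itself, replacing $\hat\mu^{(n+1)}$ by $\mu^{*,(n+1)}$ at the cost of $C\,d_1(\mu^{*,(n+1)},\hat\mu^{(n+1)})$ and using continuity of the best-response map to control $d_1(\mu^{*,(n+2)},\mu^{*,(n+1)})$, gives \eqref{eq:convergence-inequality-2}; your intuition about where those flow-discrepancy terms come from is right, but without the re-centring by $\hat e_n$ the recursion itself does not close.
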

\begin{proof}[Sketch of Proof]
Our argumentation builds up on the exact FP analysis of~\cite[Theorem 3.3.1]{Hadikhanloo-phdthesis}, which hereby extends to the approximate best response setting. 
\\ 
Let us introduce the approximate exploitability
\be*
    \hat e_n 
    := \J(\hat\pi^{(n+1)},\bar\mu^{(n)}) - \J(\bar\pi^{(n)}, \bar\mu^{(n)}) \,\ge 0\,,
\ee*
 so that $e_n = \ell_n + \hat e_n$, for $n\in\mathbb{N}$. In order to control the exploitability  $e_n$, we  focus our analysis on $\hat e_n$. Denoting $J_n: x \in \cX^T\mapsto  J(x_0, (x_{t+1} - x_t)_{t=0,\dots,T-1},\bar\mu^{(n)})$, we get: %
\be*
    \hat e_{n+1}-\frac{n}{n+1} \hat e_n 
    &\!\!=&\!\!
     \int_{\cX^T} J_{n+1} d(\hat\mu^{(n+2)} - \bar\mu^{(n+1)}) \\
     && \,
    -\frac{n}{n+1} \int_{\cX^T} J_n d(\hat\mu^{(n+1)} -\bar\mu^{(n)})\\
    &\!\!=&\!\!
  n \int_{\cX^T} ( J_{n+1} - J_n )  d(\bar\mu^{(n+1)} - \bar\mu^{(n)})  
  \\
  && \, + \int_{\cX^T} J_{n+1} d(\hat\mu^{(n+2)} - \hat\mu^{(n+1)})\,,
\ee* 
where the last equality follows from the definition of $\bar\mu^{(n)}$.\\
 The monotonicity of the reward in Assumption \ref{Assumption_reward} implies 
\be*
\hat e_{n+1}-\frac{n}{n+1}\hat e_n &\le& \int_{\cX^T} J_{n+1} d(\hat\mu^{(n+2)} - \hat\mu^{(n+1)}).
\ee*
Besides, Assumption~\ref{Assumption_reward} together with the compactness of $\cX$ and \cite[Lemma 3.5.2]{Hadikhanloo-phdthesis} and Lemma \ref{Lemma_close_FP} imply that  
$J_{n+1}-J_n$ is $C/n$-Lipschitz, leading to 
\be*
    \hat{e}_{n+1}-\frac{n}{n+1}\hat{e}_n 
    &\le& \int_{\cX^T} J_{n} d(\hat\mu^{(n+2)} - \hat\mu^{(n+1)}) 
     \\
     &&\;
    +\; \frac{C}{n} d_1(\hat\mu^{(n+2)},\hat\mu^{(n+1)})\;.
\ee* 
As $\pi^{*,(n+1)}$ is the best response to the mean field flow $\bar\mu^{(n)}$, recalling the definition of $\ell_n$ in \eqref{Error_Learning}, we deduce
$$
\hat{e}_{n+1}-\frac{n}{n+1}\hat{e}_n 
\le \ell_n + \frac{C}{n} d_1(\hat\mu^{(n+2)},\hat\mu^{(n+1)}) .
$$
Together with estimate $e_n = \ell_n +  \hat e_n$ and \cite[Lemma 3.3.1]{Hadikhanloo-phdthesis}, we derive  \eqref{eq:convergence-inequality-1} and conclude the proof.
\end{proof}

Bound \eqref{eq:convergence-inequality-2} indicates a nice averaging aggregation of the learning errors $(\ell_n)_n$, but requires a strong additional control on the Wasserstein distance between the MF flows generated by both approximate and exact best responses. Such estimate is readily available for the numerical approximation of convex stochastic control problems~\citep{kushner2013numerical} but less classical in the RL literature, as discussed in Sec~\ref{Section_Discussion_RL}. When such an estimate is not available, Bound \eqref{eq:convergence-inequality-1} provides a slower $n\ell_n$ convergence rate, up to a weak $d_1$-regularity of the approximate best response in terms of the mean field flow $\bar\mu^{(n)}$, recall Lemma \ref{Lemma_close_FP}. Such estimate is highly classical in the setting of convex stochastic control problems with Lipschitz rewards \citep{fleming2012deterministic,kushner2013numerical}. 

At finite distance, the following corollary sums up these properties in terms of MF Nash equilibrium. 

\begin{corollary}
\label{cor:main_1}
 Under  Assumption~\ref{Assumption_reward}, if ever $\frac{1}{n}\sum_{i=1}^n\big(\ell_i + C_1 d_1(\mu^{*,(i)},\hat\mu^{(i)})) \big)$ or $\frac{1}{n}\sum_{i=1}^n \big((i+1) \ell_i + C_2 d_1(\hat\mu^{(i+1)},\hat\mu^{(i)})) \big)$ is bounded by $\eps^2/2$, $(\bar\mu^{(n)},\bar\pi^{(n)})$ is an $(\eps,\eps)$-MF Nash equilibrium, for $n$ large enough.
\end{corollary}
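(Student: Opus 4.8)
The plan is to read the corollary as a direct consequence of the two error estimates of Theorem~\ref{Thm_error_1} combined with the threshold criterion of Theorem~\ref{thm_approximate_Nash}. Indeed, the latter guarantees that it is enough to produce a threshold beyond which $e_n \le \epsilon^2$: once this is achieved, $(\bar\mu^{(n)},\bar\pi^{(n)})$ is automatically an $(\epsilon,\epsilon)$-MF Nash equilibrium. I would therefore handle the two sufficient conditions of the statement separately, each one being tailored to one of the bounds \eqref{eq:convergence-inequality-2} and \eqref{eq:convergence-inequality-1}, and in each case I would show that the corresponding right-hand side falls below $\epsilon^2$ for $n$ large enough.

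For the first condition, I would regroup bound \eqref{eq:convergence-inequality-2} as
$$ e_n \;\le\; \frac{C_1}{n} + \frac{1}{n}\sum_{i=1}^n \big(\ell_i + C_1\, d_1(\mu^{*,(i+1)},\hat\mu^{(i+1)})\big)\,. $$
Up to the shift of the summation index in the Wasserstein term, the averaged sum is exactly the first hypothesis quantity; since $\cX$ is compact, $d_1$ is bounded, so the reindexing only perturbs the average by an $O(1/n)$ boundary contribution. Under the hypothesis this averaged sum is at most $\epsilon^2/2$, while the residual $C_1/n$ (together with the $O(1/n)$ reindexing correction) is at most $\epsilon^2/2$ as soon as $n$ exceeds an explicit threshold. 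Hence $e_n \le \epsilon^2$ and Theorem~\ref{thm_approximate_Nash} yields the claim.

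The second condition is treated analogously from \eqref{eq:convergence-inequality-1}, which I would rewrite as
$$ e_n \;\le\; \ell_n + \frac{C_2}{n} + \frac{1}{n}\sum_{i=1}^n \big((i+1)\ell_i + C_2\, d_1(\hat\mu^{(i+1)},\hat\mu^{(i+2)})\big)\,, $$
the averaged sum again matching the second hypothesis quantity, up to an $O(1/n)$ index shift (using the symmetry of $d_1$), hence at most $\epsilon^2/2$. The \emph{main obstacle} is to dispose of the two residual terms $\ell_n + C_2/n$: the contribution $C_2/n$ vanishes as $n\to\infty$, whereas the standalone learning error $\ell_n$ has to be shown negligible. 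I would exploit the nonnegativity of the summands: the single index $i=n$ already gives $\tfrac{n+1}{n}\ell_n \le \tfrac1n\sum_{i=1}^n (i+1)\ell_i \le \epsilon^2/2$, and, more importantly, boundedness of this weighted Ces\`aro average strongly constrains the tail of $(\ell_i)_i$, forcing $\ell_n$ to be small for $n$ large (the regime $\ell_i = O(1/i)$ being precisely what keeps the average bounded), so that $\ell_n + C_2/n \le \epsilon^2/2$ for $n$ large. This again produces $e_n \le \epsilon^2$, and Theorem~\ref{thm_approximate_Nash} concludes.

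In both cases the only delicate bookkeeping is the passage between the indices appearing in the hypotheses and those in Theorem~\ref{Thm_error_1} --- harmless thanks to boundedness of $d_1$ on the compact set $\cX$ --- and, for the slower bound, the verification that the residual learning error $\ell_n$ becomes negligible; the averaging structure together with nonnegativity of all the errors $(\ell_i)_i$ is exactly what makes this decay available, and neither point affects the asymptotic threshold in $n$.
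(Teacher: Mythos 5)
Your overall route --- feed the two bounds of Theorem~\ref{Thm_error_1} into the threshold criterion of Theorem~\ref{thm_approximate_Nash}, handling the index shifts via boundedness of $d_1$ on the compact set $\cX^T$ --- is exactly how the paper intends the corollary (it is stated without proof, as an immediate combination of those two theorems), and your treatment of the first condition is correct: there the hypothesis caps the \emph{entire} averaged sum by $\epsilon^2/2$, which leaves the full remaining $\epsilon^2/2$ margin to absorb the residual $C_1/n$ and the $O(1/n)$ reindexing corrections, so $e_n \le \epsilon^2$ for $n$ large and Theorem~\ref{thm_approximate_Nash} applies.

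The gap is in your handling of the standalone $\ell_n$ in the second condition. The claim that boundedness of $\frac{1}{n}\sum_{i=1}^n (i+1)\ell_i$ ``forces $\ell_n$ to be small for $n$ large'' (with $\ell_i = O(1/i)$ as the only compatible regime) is false: take $\ell_i = c\,\mathds{1}_{\{i = 2^k \text{ for some } k\}}$ with $c = \epsilon^2/6$; then $\frac{1}{n}\sum_{i \le n}(i+1)\ell_i \le \frac{c}{n}\left(2n + \log_2 n + 1\right) \le 3c = \epsilon^2/2$ for all $n$, yet $\limsup_n \ell_n = c > 0$, and by spacing the spikes doubly exponentially one can even push $\limsup_n \ell_n$ up to $\epsilon^2/2$ while preserving the hypothesis. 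So the only legitimate information is your own single-term bound $\ell_n \le \frac{n}{n+1}\cdot\frac{\epsilon^2}{2}$, and the chain then yields $e_n \le \epsilon^2 + O(1/n)$ rather than $e_n \le \epsilon^2$: the $\epsilon^2/2$ budget is consumed twice (once by the averaged sum, once by $\ell_n$), leaving no margin for $C_2/n$ and the boundary terms. This is a near miss --- it does prove an $(\epsilon',\epsilon')$-MF Nash equilibrium for every $\epsilon' > \epsilon$ and $n$ large, and it closes exactly as stated if the hypothesis holds with any strict margin, e.g.\ $\limsup_n \frac{1}{n}\sum_{i=1}^n\big((i+1)\ell_i + C_2\, d_1(\hat\mu^{(i+1)},\hat\mu^{(i)})\big) < \epsilon^2/2$ (a slack the paper's loose ``for $n$ large enough'' phrasing arguably glosses over as well) --- but the tail-decay deduction you invoke to discard $\ell_n$ is not a valid step, and it is the one point where the argument as written would fail.
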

In a similar fashion, we can conclude on the general asymptotic convergence of Algorithm \ref{algo:AFP-MFG-2} to the unique MF-Nash equilibrium, before discussing the specific implications for RL best response approximation schemes. \begin{corollary}
\label{cor:main_2}
 Under  Assumption~\ref{Assumption_reward}, the approximate FP algorithm converges to the unique MF Nash equilibrium whenever one of the following two conditions holds:
 \begin{enumerate}
     \item The approximate best response update procedure $\bar\mu^{(n)}\mapsto\hat\mu^{(n+1)}$ is continuous in $d_1$, and $n\ell_n \to 0$, as $n \to \infty$\;;
     \item The learning and policy approximation errors $\ell_n$ and $(d_1(\mu^{*,(n)},\hat\mu^{(n)}))_n$ converge to $0$.
\end{enumerate}
\end{corollary}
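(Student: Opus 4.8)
The plan is to reduce the whole statement to the single convergence $e_n\to 0$ of the exploitability, and then to invoke the second part of Theorem~\ref{thm_approximate_Nash} together with the uniqueness of the MF Nash equilibrium guaranteed by the Lasry--Lions monotonicity condition in Assumption~\ref{Assumption_reward}. Since $\cX$ is compact, $\cP(\cX)$ is compact for the Wasserstein distance $d_1$ and hence so is $M_T=\cP(\cX)^T$; thus $(\bar\mu^{(n)})_n$ admits accumulation points. If $e_n\to 0$, Theorem~\ref{thm_approximate_Nash} ensures that every such accumulation point is an MF Nash equilibrium, and uniqueness then forces the whole sequence $\bar\mu^{(n)}$ to converge to it (in a compact metric space a sequence with a single accumulation point converges to it). It therefore remains to prove $e_n\to 0$ under each of the two sets of hypotheses, which I would do by feeding them into the two estimates of Theorem~\ref{Thm_error_1}.

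Under the second condition I would start from bound~\eqref{eq:convergence-inequality-2}. The prefactor term $C_1/n$ vanishes, while the remaining two terms are Cesàro averages of the sequences $\big(d_1(\mu^{*,(i+1)},\hat\mu^{(i+1)})\big)_i$ and $(\ell_i)_i$. Both converge to $0$ by assumption, so their Cesàro means also converge to $0$, whence $e_n\to 0$. This direction is essentially immediate once the bound is available.

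Under the first condition I would instead use bound~\eqref{eq:convergence-inequality-1}. The terms $\ell_n$ and $C_2/n$ vanish because $n\ell_n\to 0$ forces $\ell_n\to 0$. For the last term $\frac{1}{n}\sum_{i=1}^n (i+1)\ell_i$, I note that $n\ell_n\to 0$ gives $(i+1)\ell_i\to 0$, so its Cesàro average again tends to $0$. The delicate term is $\frac{C_2}{n}\sum_{i=1}^n d_1(\hat\mu^{(i+1)},\hat\mu^{(i+2)})$: here I would combine Lemma~\ref{Lemma_close_FP}, which yields $d_1(\bar\mu^{(n)},\bar\mu^{(n+1)})\le C/n\to 0$, with the assumed $d_1$-continuity of the best-response update map $\Phi:\bar\mu^{(n)}\mapsto\hat\mu^{(n+1)}$. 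Because $M_T$ is compact, continuity upgrades to uniform continuity, so that $d_1(\hat\mu^{(n+1)},\hat\mu^{(n+2)})=d_1(\Phi(\bar\mu^{(n)}),\Phi(\bar\mu^{(n+1)}))\to 0$, and its Cesàro average vanishes as well. Hence $e_n\to 0$.

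The only genuinely non-routine step is this last one: passing from the smallness of $d_1(\bar\mu^{(n)},\bar\mu^{(n+1)})$ to that of $d_1(\hat\mu^{(n+1)},\hat\mu^{(n+2)})$. Plain pointwise continuity of $\Phi$ would not suffice along a moving argument, so I would explicitly exploit the compactness of $M_T$ to obtain uniform continuity, which is exactly what closes the gap. Everything else reduces to the elementary fact that the Cesàro average of a null sequence is null, and to the reduction-to-$e_n$ argument of the first paragraph.
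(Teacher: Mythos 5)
Your proof is correct and follows essentially the same route the paper intends: feed the two hypotheses into the respective bounds \eqref{eq:convergence-inequality-1} and \eqref{eq:convergence-inequality-2} of Theorem~\ref{Thm_error_1}, use Ces\`aro averaging of null sequences to conclude $e_n\to 0$, and then combine the second part of Theorem~\ref{thm_approximate_Nash} with uniqueness of the equilibrium (Lasry--Lions monotonicity in Assumption~\ref{Assumption_reward}) and the $d_1$-compactness of $M_T$, which is precisely the tightness/pre-compactness remark the paper makes after the corollary. Your uniform-continuity argument for passing from $d_1(\bar\mu^{(n)},\bar\mu^{(n+1)})\le C/n$ to $d_1(\hat\mu^{(n+1)},\hat\mu^{(n+2)})\to 0$ under condition~1 is a correct filling-in of a step the paper leaves implicit.
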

The convergence of the sequence $(\bar\mu^{(n)})_n$ follows from the tightness and pre-compactness property of this collection of measures with respect to the Wasserstein distance, see e.g. Remark 3.5.3 in \cite{Hadikhanloo-phdthesis}.

\subsection{Discussion on the convergence for Best Response RL approximation}\label{Section_Discussion_RL} 

The result in Theorem~\ref{Thm_error_1} is general and relies on standard assumptions of MFGs. It also relies on a good enough control of the approximation error on the best response at each iteration. Here, we discuss to what extent existing theoretical results for RL algorithms allow satisfying this assumption.

As stated in Corollary~\ref{cor:main_2}, in order for the approximate FP to converge to the exact MF Nash equilibrium, the approximate best response should converge quickly enough to the best one, depending on the number of iterations. From an RL perspective, this would require being able to compute the approximate optimal policy to an arbitrary precision, with high probability. As far as we know, such a result is possible only when an exact representation of any value function is possible, that is, in the tabular setting which imposes finite state and action spaces. Notably, convergence and rate of convergence of Q-learning-like algorithms have been studied in the literature, see e.g.~\citep{szepesvari1998asymptotic,kearns1999finite,even2003learning,azar2011speedy}. For example, the speedy Q-learning algorithm requires $\mathcal{O}(\ln(K)/(\epsilon^2 (1-\gamma)^4))$ steps to learn an $\epsilon$-optimal state-action value function with high probability, with $K$ the number of state-action couples. According to Corollary~\ref{cor:main_2}, if the error is in $\mathcal{O}(n^{-\alpha})$ with $\alpha>1$ (and if we have continuity in $d_1$), then the scheme converges to the Nash equilibrium. This suggests using $\mathcal{O}(\ln(K)n^{2\alpha}/(1-\gamma)^4)$ steps for the RL agent at iteration $n$. Yet, this kind of results does not provide guarantees on the continuity in $d_1$.

According to Corollary~\ref{cor:main_1}, bounding the learning errors and the distance between two iterates of the distribution is sufficient to reach an approximate Nash equilibrium. As approximate FP can be seen as repeated RL problems, RL (or approximate dynamic programming) can be seen as repeated supervised learning problems, and the propagation of errors from supervised to RL is a well studied field, see e.g.~\citep{farahmand2010error,scherrer2015approximate}. Basically, if the supervised learning steps are bounded by some $\mathcal{O}(\epsilon)$, then the learning error of the RL algorithm is bounded by $\mathcal{O}(C \epsilon / (1-\gamma)^2)$, where $C$ is a so -called concentrability coefficient, measuring the mismatch between some measures. In principle, we could then propagate the learning error of the supervised learning part up to the FP error, through the RL error. However, these results do not provide any guarantees on the proximity between the estimated optimal policy and the actual one (which would be a sufficient condition for the proximity between population distributions); it only provides a guarantee on the distance between their respective returns. This is due to the fact that in RL, the optimal value function is unique, but not the optimal policy. A perspective would be to consider regularized MDPs~\citep{geist2019theory}, where the optimal policy is unique (and greediness is Lipschitz). Yet, this would come at the cost of a bias in the Nash equilibria. The approach in \citep{guo2019learning} somehow builds partially on this idea in their specific learning scheme.

\section{Numerical illustration}\label{Section_Expe}
 As an illustration, we consider a stylized authoritative  MFG model with congestion in the spirit of \citet{MR3698446}. This application should be seen as a proof of concept showing that the method described above can be applied beyond the framework used for our theoretical results. We compute a model free approximation of the MFG solution combining Algorithm \ref{algo:AFP-MFG-2} with Deep Deterministic Policy Gradient (DDPG)~\citep{lillicrap2015continuous}. As far as we know, this is the first numerical illustration of model free deep RL Algorithm for MFG with continuous states and actions. 
Our numerical results also demonstrate the empirical convergence of the Fictitious RL scheme in a larger setting, even when the MFG is of not first order type.
 
As usual in RL, instead of~\eqref{eq:def-general-J} we consider the problem in infinite horizon with the following discounted reward:
\begin{equation}
    J(x_0,\pi,\mu) := 
    \E\left[ \sum_{t = 0}^{\infty} \gamma^t r(x^{\pi}_t,\mu_t,a_t)  \right]\;, 
\end{equation}
when an infinitesimal player interacts with the population MF flow $\mu = (\mu_0, \mu_1, \mu_2, \dots)$. The goal is to learn the policy which is optimal in the long run, i.e., when the behavior of the population becomes stationary.

\begin{figure*}[ht!]
  \begin{tabular}{@{}cccc@{}}
    \includegraphics[width=.229\textwidth]{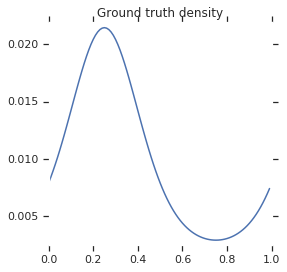} &
    \includegraphics[width=.229\textwidth]{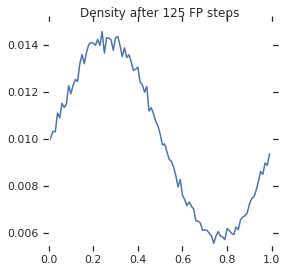} &
    \includegraphics[width=.235\textwidth]{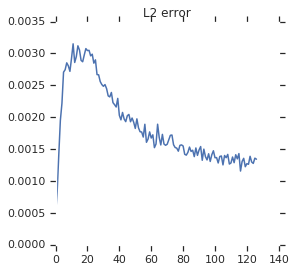} &
    \includegraphics[width=.235\textwidth]{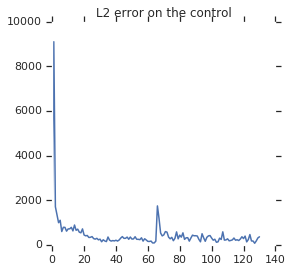} 
  \end{tabular}
  \caption{From left to right: 1) the continuous time explicit solution for $\gamma=1$, 2) the discrete time learned distribution of the policy with $\gamma=0.95$ and $\Delta_t = 0.05$. 3) the $L_2$ error between the explicit solution and the density for each FP iterations, 4) the $L_2$ error between the control and the optimal control and for each FP iterations on a trajectory.}
  \label{tradeoff-experiment}
\end{figure*}
\paragraph{Environment} Each agent has a position $x$ located on the torus $\mathbf{T} = [0,1]$ with periodic boundary conditions (for simplicity of explicit solution), whose dynamics is governed by $x_{t+1} = x_t +  a_t \Delta_t + \sqrt{\Delta_t} \epsilon_t, \quad t = 0, 1, \dots,$ 
 where $\Delta_t$ is the time step of the continuous time process. It receives the per-step reward 
 $$
     r(x_t,\mu_t,a_t) = \tilde r(x_t) -\frac{1}{2}|a_t|^2 - \log(\mu_t)\;,
$$
 where the last term motivates agents to avoid congestion, \textit{i.e.} the proximity to a region with a large population density. In the continuous time setting with no discounting, a direct PDE argument provides the ergodic solution $(a^*,\mu^*)$ in closed form~\citep{MR3698446} 
 \begin{equation}
 \label{eq:sol-expli}
 a^*:x\mapsto \pi \cos(2\pi x)\;\quad \mbox{and} \;\quad 
 \mu^*:x\mapsto  \frac{e^{2 \sin(2\pi x)}}{\int_{\mathbf{T}} e^{2 \sin(2\pi y)} dy}\;,
 \end{equation}
 when the geographic reward is of the form $\tilde r(x_t)=2 \pi^2 \sin(2 \pi x_t) - 2 \pi^2 \cos(2 \pi x_t)^2 + 2 \sin(2 \pi x_t)$. This closed form solution offers a nice benchmark for our experiments and allows to measure the errors made by our algorithm.
 \begin{algorithm}[!h]
\DontPrintSemicolon
\KwData{Number of FP iterations $N_{FP}$, an empty buffer of trajectories $R_{FP}$, the number of trajectories  $N_{\textrm{trajectories FP}}$ to add to the replay buffer $R_{FP}$ per steps}
\KwResult{a density $\bar\mu^{(N_{FP})}$ and a strategy $\hat \pi^{(N_{FP})}$.}
\Begin{
  \For{$n = 1, \dots, N_{FP}$}{
    Compute a best response $\hat\pi^{(n)}$ using DDPG \;
    Collect trajectories $N_{\textrm{trajectories FP}}$ of the strategy $\hat\pi^{(n)}$ and add them to the buffer  $R_{FP}$ \;
    Learn the population mean field distribution $\bar\mu^{(n)}$ by Monte Carlo samples on $R_{FP}$ \; 
    Compute the uniformly randomized policy $\bar \pi^{(n)}$ on $(\hat \pi^{(1)},\ldots,\hat \pi^{(n)})$\;
    }
  \KwRet{$\bar\mu^{(N_{FP})}$ and $\bar \pi^{(N_{FP})}$}
  }
\caption{Fictitious Play for continuous state and action Mean Field Games}
\label{algo:AFP-MFG-Experiment}
\end{algorithm}
\paragraph{Implemented Algorithm} 
Model free FP for MFGs takes a somehow similar approach as~\citet{Lanctot17PSRO} in the sense that we estimate the best response using a model free RL algorithm (namely DDPG). However we do not maintain those best responses as in~\citep{Lanctot17PSRO} but rather learn the population MF flow of the distribution of the representative agents. 
The best response approximation through DDPG and the estimation of the population MF are left in the Algorithm. We ran $30000$ trajectories of DDPG with a trajectory length of $300$. The noise used for exploration is a centered normal noise with variance $0.02$ and we used Adam optimizers with $0.001$ starting learning rate and $\tau = 0.01$. At each iteration of FP, we added $N_{\textrm{trajectories FP}} = 3000$ trajectories of length $1000$ to the replay buffer. Finally, we estimated the density using $100$ classes and doing $30000$ steps of Adam (with $0.001$ initial learning rate). 
\paragraph{Results.} Figure \ref{tradeoff-experiment} presents the learned equilibrium computed for $\gamma=0.95$, $N=90$ and uniform initial distribution, as well as the continuous time closed form ergodic solution for $\gamma=1$, see~\eqref{eq:sol-expli}. We emphasize that the variation in $\gamma$ together with the discrete/continuous time difference setting implies that the theoretical solutions to both problems are close but do not exactly coincide. We keep this benchmark since no ergodic closed form solution is available for $\gamma\neq 1$. As observed on Figure \ref{tradeoff-experiment}, both ergodic explicit and learned distributions and controls are close. As expected, the density of players is larger around the point of maximum of the reward but the distribution is not highly concentrated due to the logarithmic penalty encoding aversion for congested regions. More precisely, Figure \ref{tradeoff-experiment} indicates that the $L_2$ errors between the distributions and the controls decrease with the number $n$ of iterations. The convergence of control distributions echoes to the discussion on error propagation in Section \ref{Sec_Error_Propagate}. This clearly illustrates the numerical convergence of the Deep RL FP mean field algorithm. 
\section{Related work}
The related literature is as follows. Recently, model free RL algorithms for solving MFGs were analyzed in the following papers: \citet{guo2019learning} and~\citet{tiwari2019reinforcement} study $Q$-learning, \citet{mguni2018decentralisedli}
consider FP but contains several inaccuracies, as already pointed out in~\citet{subramanianpolicy}, which focuses on policy gradient methods. However, their studies are restricted to a stationary setting and focus on particular RL algorithms. Their convergence results hold under assumptions that are often hard to verify in practice. Although not focusing on an MFG, \citet{yang2018mean} uses the idea of MF approximation by considering interactions  through the empirical mean action. Numerical illustrations provided in all these papers are in a finite state-action setting, while we present a numerical example in a continuous  state-action setting. On a different note, \citet{yang2018deep} studies the link between MFG and \emph{inverse} RL. Some authors also study ``learning'' algorithms which use the full knowledge of the model (and hence are not model-free): \citet{yin2010learning} studied a MF oscillator game while \citet{hu2019deep} proposed a decentralized deep FP learning architecture for large MARL, whose convergence holds on linear quadratic MFG examples with explicit solution and small maturity. 
\section{Conclusion and future research}
 In comparison to the existing literature focusing on specific RL algorithms for MFGs, we took a step back and offer a general perspective on the error propagation in iterative scheme for MFG, using any learning algorithm. We presented a rigorous convergence analysis of model free FP learning algorithm for MF Agent systems, encompassing cases where  the best response is approximated using any single agent learning algorithm as well as non-stationary settings. We showed how the convergence of model free iterative FP algorithm reduces to the error analysis of each learning iteration step, as the convergence of RL algorithm reduces to the aggregation of repeated supervised learning  approximation errors \citep{farahmand2010error,scherrer2015approximate}. Our theoretical setting covers for the first time the consideration of non-stationary MFG and relies on reasonable and verifiable assumptions on the MFG of interest. The convergence is illustrated for the first time by numerical experiments in a continuous state-action setting, based on deep RL algorithm. Our analysis motivates and properly justifies the use of asymptotic Mean Field approximation for the study of learning by experience schemes in Multi-Agent systems, with a large number of agents.\\
For RL approximation schemes, our analysis suggests a much faster convergence rate, whenever the best response approximation quality can be controlled in Wasserstein distance. This kind of estimate is  classical in the numerical approximation of stochastic control literature but currently not available in the RL literature. The derivation of such estimate  deserves to be addressed in future research papers. Finally, we focused on convergence properties for a centralized Multi-Agent learning algorithm, paving the way for addressing such property for a more relevant decentralized one.

\clearpage
\newpage

{\fontsize{9.0pt}{10.0pt}
\selectfont
\bibliographystyle{aaai}
\bibliography{afp-mfg_bib}
}

\clearpage

\onecolumn

\section*{Appendix}
This Appendix regroups the technical proofs related to the error propagation bounds on the Approximate Fictitious Play algorithm detailed in Theorem \ref{Thm_error_1}. Many arguments reported here are inspired by the results presented in \citep{Hadikhanloo-phdthesis} for the exact fictitious play algorithm.

We follow the notations of Section \ref{Section_First_Order}. In particular, we recall that $\cX$ denotes the set of accessible states before time $T$. Since we take $b(x,a,\mu) = a$ and the set $\cA$ is compact, $\cX^T = \cX \times \dots \times \cX$ is compact too and in particular it is a bounded subset of $\R^{T \times d}$.

In order to measure the proximity between MF population flows, we denote by $d_1$, the $1-$ Wasserstein distance defined (using Kantorovitch-Rubinstein duality) as: for all $\mu, \mu' \in M_T$,
$$
    d_1(\mu, \mu')
    =
    \sup_{h \in Lip_1(\cX^T, \R)} \sum_{t \in \Tb} \int_{\cX} h(x) d(\mu_t - \mu'_t)(x), 
$$
where $Lip_1(\cX^T, \R)$ is the set of $1$-Lipschitz continuous function from $\cX^T$ to $\R$.

\subsection*{A. Stability of the FP mean field flow $(\bar\mu^{(n)})_n$}
Let us first provide the proof of Lemma \ref{Lemma_close_FP} which ensures the closeness in $d_1$ of two consecutive elements of the Mean field flow learning sequence $\bar\mu^{(n)}$.  Let first recall from the definition of $\bar\mu^{(n)}$ that we have: for all $n\in\mathbb{N}$,
\begin{align}
    \bar\mu^{(n+1)}-\bar\mu^{(n)} 
    &= 
    \frac{1}{n}\left[\hat\mu^{(n+1)}-\bar\mu^{(n+1)}\right]
    \notag
    \\
    \label{eq:FP-rel-barmu-hatbar}
    &= 
    \frac{1}{n+1}\left[\hat\mu^{(n+1)}-\bar\mu^{(n)}\right] \;.
\end{align}

\begin{proof}[Proof of Lemma \ref{Lemma_close_FP}]
Let $h \in Lip_1(\cX^T, \R)$. We recall that $\cX^T$ is bounded and pick $\mathbf{x}_0\in\cX^T$. Then, using~\eqref{eq:FP-rel-barmu-hatbar} together with the definition of $\bar\mu^{(n)}$, we compute 
\begin{align*}
    \left|\int_{\cX^T} h(\mathbf{x}) d(\bar\mu^{(n+1)} - \bar\mu^{(n)})(\mathbf{x})\right|
    &= 
    \frac{1}{n+1} \left|\int_{\cX^T} h(\mathbf{x}) d(\hat\mu^{(n)} - \bar\mu^{(n)})(\mathbf{x})\right|
    \\
    &= \frac{1}{n+1} \left|\int_{\cX^T} (h(\mathbf{x}) - h(\mathbf{x}_0)) d(\hat\mu^{(n)} - \bar\mu^{(n)})(\mathbf{x})\right|
    \\
    &\le \frac{1}{n+1} \int_{\cX^T} \|\mathbf{x}-\mathbf{x}_0\| \left[ d \hat\mu^{(n)}(\mathbf{x}) +  d \bar\mu^{(n)}(\mathbf{x}) \right] 
    \\
    &\le \frac{C}{n+1}\;,
\end{align*}
since $\cX^T$ is bounded. This result being valid for any $h \in Lip_1(\cX^T, \R)$, we obtain $$d_1(\bar\mu^{(n+1)}, \bar\mu^{(n)}) \le \frac{C}{n+1} \;, \quad n\in\mathbb{N}\,.$$ 
\end{proof}

\subsection*{B. Propagation error estimates}

This section is dedicated to the rigorous derivation of the bounds \eqref{eq:convergence-inequality-1} and \eqref{eq:convergence-inequality-2}  presented in Theorem \ref{thm_approximate_Nash}.

We first recall the following useful result, see e.g.~\cite[Lemma 3.3.1]{Hadikhanloo-phdthesis}.
\begin{lemma}
\label{lem:useful-bound-sequence}
Let $(\varphi_n)_{n}$ and $(\lambda_n)_{n}$ be two sequences of real numbers such that 
$$
    (n+1)\varphi_{n+1}-n\varphi_n \le {\lambda_n}, \qquad n\in\mathbb{N}\,.
$$
Then, we have the estimate: 
$$
    \varphi_n \le \frac{\varphi_0}{n} + \frac{1}{n} \sum_{i=1}^n \lambda_i, \qquad n\in\mathbb{N}\;.
$$ 
\end{lemma}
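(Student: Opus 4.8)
The statement is an elementary discrete Gronwall-type estimate, and the plan is to linearize the recursion by rescaling. The key observation is that the $n$-dependent coefficients appearing in $(n+1)\varphi_{n+1}-n\varphi_n \le \lambda_n$ are precisely those of a first difference of the rescaled sequence $u_n := n\varphi_n$. Introducing this sequence, the hypothesis becomes simply
\[
    u_{n+1} - u_n \le \lambda_n, \qquad n \in \mathbb{N},
\]
so that the awkward multiplicative factors disappear and all that remains is a plain increment bound on $u_n$.

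Once the inequality is in this form I would telescope it, or equivalently argue by induction on $n$. Summing the increment inequality over consecutive indices makes the left-hand side collapse to $u_n$ minus its initial value, while the right-hand side accumulates into the partial sum of the $\lambda_i$. Dividing the resulting bound on $u_n = n\varphi_n$ by $n$ then yields the claimed estimate, with the term $\varphi_0/n$ being the contribution of the initial value carried through the telescoping. The inductive phrasing is probably cleanest for a self-contained write-up: one checks the base case directly, and the induction step follows by adding the single increment inequality $u_{n+1} \le u_n + \lambda_n$ on top of the induction hypothesis.

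There is no serious obstacle here, as the lemma is purely arithmetic. The only point requiring a little care is the index bookkeeping, namely ensuring that the range of summation of the $\lambda_i$, the initial normalization of $u_0$, and the isolated $\varphi_0$ term all line up consistently with the stated right-hand side. This is the familiar off-by-one accounting that is trivial in substance but must be carried out carefully, and it constitutes essentially the entire content of the proof.
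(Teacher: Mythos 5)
Your plan is the standard (and essentially unique) proof: the paper itself does not prove this lemma but recalls it from the cited thesis \cite[Lemma 3.3.1]{Hadikhanloo-phdthesis}, and the argument there is exactly your rescaling $u_n := n\varphi_n$, which turns the hypothesis into the increment bound $u_{n+1}-u_n \le \lambda_n$, followed by telescoping (or the equivalent induction).

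One concrete caveat about the bookkeeping you deferred, because it is precisely where the literal statement creaks: since $u_0 = 0\cdot\varphi_0 = 0$, the initial value contributes \emph{nothing} to the telescoped sum, contrary to your remark that the $\varphi_0/n$ term is ``the contribution of the initial value carried through the telescoping.'' What the telescoping actually yields is $n\varphi_n \le \sum_{i=0}^{n-1}\lambda_i$, i.e. $\varphi_n \le \frac{1}{n}\sum_{i=0}^{n-1}\lambda_i$. This differs from the printed right-hand side in two harmless-in-context but real ways: the term $\varphi_0/n$ is pure slack, legitimate to add only when $\varphi_0 \ge 0$ (which holds in the paper's applications, where $\varphi_0$ is an exploitability $\hat e_0 \ge 0$ or $e_0 \ge 0$), and the index range of the $\lambda_i$'s is shifted by one unit ($\lambda_0,\dots,\lambda_{n-1}$ versus $\lambda_1,\dots,\lambda_n$), a shift that in the paper is absorbed into the unspecified constants $C_1, C_2$ of Theorem~\ref{Thm_error_1}. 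So carry out the telescoping exactly as you planned, but expect to land on the cleaner inequality above rather than on the lemma's stated form; if you insist on reproducing the $\varphi_0/n$ term from the recursion itself, the computation will (correctly) refuse to produce it, and you should instead note that the stated bound is the telescoped one weakened by nonnegative slack and an index relabeling. With that understanding your argument is complete and correct.
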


For ease of notation, we introduce $J_n := J(.,\bar\mu^{(n)})$ and $J_{n+1} := J(.,\bar\mu^{(n+1)})$ (which are functions defined over $\cX^T$), for all $n \geq 0$. More precisely, for $x \in \cX^T$,
$$
    J_n(x) := J(x_0, (x_{t+1} - x_t)_{t=0,\dots,T-1},\bar\mu^{(n)}).
$$
Observe that this definition is accurate by the definition of the first order MFG setting presented in Section \ref{Section_FirstOrderMFG} and because there is a bijection between process trajectory and the combination of initial position and policy.

\begin{proof}[Proof of estimate \eqref{eq:convergence-inequality-1} in Theorem \ref{Thm_error_1}]
We adapt the arguments in the proof of~\cite[Theorem 3.3.1]{Hadikhanloo-phdthesis} to our setting with approximate best responses.

Let us introduce the approximate learning error $\hat e_n$ defined by: for $n\in\mathbb{N}$,
\begin{align*}
    \hat e_n 
    &:= \E_{x_0 \sim \mu_0}[J(x_0, \hat\pi^{(n+1)}, \bar\mu^{(n)})-J(x_0, \bar\pi^{(n)}, \bar\mu^{(n)})]\ge 0\;,
\end{align*}
so that $e_n = \ell_n + \hat e_n$. In order to control $e_n$, we will focus our analysis on $\hat e_n$ and compute
\be*
    &&(n+1)\hat e_{n+1}-n \hat e_n 
    \\
    &=& 
    (n+1) \int_{\cX^T} J_{n+1} d(\hat\mu^{(n+2)} - \bar\mu^{(n+1)}) 
    -{n} \int_{\cX^T} J_n d(\hat\mu^{(n+1)} -\bar\mu^{(n)})\\
    &=&
    (n+1)\int_{\cX^T} J_{n+1} d(\hat\mu^{(n+1)} - \bar\mu^{(n+1)}) 
    -n\int_{\cX^T} J_n d(\hat\mu^{(n+1)} - \bar\mu^{(n)})
   +\quad (n+1)\int_{\cX^T} J_{n+1} d(\hat\mu^{(n+2)} - \hat\mu^{(n+1)})
    \\
    &=&
  n(n+1) \int_{\cX^T} ( J_{n+1} - J_n )  d(\bar\mu^{(n+1)} - \bar\mu^{(n)})  
   + (n+1)\int_{\cX^T} J_{n+1} d(\hat\mu^{(n+2)} - \hat\mu^{(n+1)}),
\ee* 
 where the last equality follows from (\ref{eq:FP-rel-barmu-hatbar}).\\
 Thanks to Assumption \ref{Assumption_reward}, the monotonicity of the reward function implies directly 
\beq\label{eq_hat_phi}
(n+1)\hat e_{n+1}-n\hat e_n \le (n+1)\int_{\cX^T} J_{n+1} d(\hat\mu^{(n+2)} - \hat\mu^{(n+1)}).
\eeq 
 By definition of $J_n$ together with the first order MFG dynamics, we have the expression
$$
    J_n(x) = \sum_{t=0}^{T-1} \gamma^t\left[\tilde r(x_t, x_{t+1} - x_{t}) + \bar r(x_t, \bar\mu^{(n)}_t)\right].
$$
Moreover, using Assumption~\ref{Assumption_reward} and the compactness of $\cX$, we deduce as in~\cite[Lemma 3.5.2]{Hadikhanloo-phdthesis}, the existence of a constant $C$ such that for all $\mathbf{x}, \mathbf{x}' \in \cX^T$,
\begin{equation}
\label{eq:Jn-lip-property}
\begin{split}
     &|J_{n+1}(\mathbf{x}) - J_n(\mathbf{x}) - J_{n+1}(\mathbf{x}') + J_n(\mathbf{x}')|
    \le C \|\mathbf{x}-\mathbf{x}'\|_\infty \, d_1(\bar\mu^{(n+1)},\bar\mu^{(n)}).
\end{split}
\end{equation}

This property of the reward function together with  Lemma~\ref{Lemma_close_FP} indicate that $J_{n+1}-J_n$ is $C/n$-Lipschitz, so that  
\be*
    && (n+1)\hat{e}_{n+1}-n\hat{e}_n 
    \\
    &\le& (n+1)\int_{\cX^T} J_{n} d(\hat\mu^{(n+2)} - \hat\mu^{(n+1)}) 
    + C d_1(\hat\mu^{(n+2)},\hat\mu^{(n+1)})
    \\
    &\le& (n+1)\int_{\cX^T} J_{n} d(\mu^{*,(n+1)} - \hat\mu^{(n+1)}) 
    + C d_1(\hat\mu^{(n+2)},\hat\mu^{(n+1)})\;,
\ee* 
where, in the last inequality, we used the fact that $\pi^{*,(n+1)}$ is the best response with respect to $\bar\mu^{(n)}$ and hence
\begin{align}
\label{eq:proof-cv-ineq-mustar}
    \int_{\cX^T} J_{n} d \mu^{*,(n+1)}
    &\geq
    \int_{\cX^T} J_{n} d \hat\mu^{(n+2)}.
\end{align}
Indeed, by optimality of $\pi^{*,(n+1)}$, we have
\begin{align*}
    \int_{\cX^T} J_{n} d \mu^{*,(n+1)}
    &= 
    \J(\pi^{*,(n+1)}, \bar\mu^{(n)}) \,\geq\;
    \J(\hat \pi^{(n+2)}, \bar\mu^{(n)}).
\end{align*}
 By definition of the learning error $\ell_{n}$ in \eqref{Error_Learning}, we deduce
$$
(n+1)\hat{e}_{n+1}-n\hat{e}_n 
\le (n+1)\ell_n + C d_1(\hat\mu^{(n+2)},\hat\mu^{(n+1)}) .
$$

By Lemma~\ref{lem:useful-bound-sequence} applied to 
$$
    \varphi_n = \hat{e}_n, 
    \quad \hbox{ and } \quad
    \lambda_n = (n+1) \ell_{n} + C d_1(\hat\mu^{(n+2)},\hat\mu^{(n+1)}),
$$ 
we derive the estimate 
$$
    \hat e_n \le \ell_n + \frac{\hat e_0}{n}+\frac{1}{n}\sum_{i=1}^n (i+1)\ell_i +\frac{1}{n}\sum_{i=1}^n d_1(\hat\mu^{(i+2)},\hat\mu^{(i+1)}).
$$
Combining this estimate with the relation $e_n = \ell_n +  \hat e_n$ provides \eqref{eq:convergence-inequality-1}.
\end{proof}

We are now in position to turn to the proof of the remaining estimate \eqref{eq:convergence-inequality-2}.

\begin{proof}[Proof of estimate \eqref{eq:convergence-inequality-2} in Theorem \ref{Thm_error_1}]
 Combining the relation $e_n = \ell_n +  \hat e_n$ together with estimate \eqref{eq_hat_phi}, we compute
\be*
&& (n+1)e_{n+1}-ne_n \\
&\le&
 (n+1)\int_{\cX^T} J_{n+1} d(\mu^{*,(n+2)} - \hat\mu^{(n+1)}) 
 - n \int_{\cX^T} J_n d(\mu^{*,(n+1)} - \hat\mu^{(n+1)}) 
 \\
&=&
 (n+1)\int_{\cX^T} J_{n+1} d(\mu^{*,(n+2)} - \mu^{*,(n+1)}) 
 + \int_{\cX^T} [(n+1)J_{n+1} - n J_n] d(\mu^{*,(n+1)} - \hat\mu^{(n+1)}) 
 \\
&=&
 (n+1)\int_{\cX^T} [J_{n+1}-J_n] d(\mu^{*,(n+2)} - \mu^{*,(n+1)}) 
 + (n+1)\int_{\cX^T} J_n d(\mu^{*,(n+2)} - \mu^{*,(n+1)})
 \\ &&\quad +   (n+1)\int_{\cX^T} [J_{n+1} - J_n] d(\mu^{*,(n+1)} - \hat\mu^{(n+1)}) 
 + \int_{\cX^T} J_n d[\mu^{*,(n+1)}-\hat\mu^{(n+1)}]\,.
\ee* 

 Hence, the Lipschitz property~\eqref{eq:Jn-lip-property} together with Lemma  \ref{Lemma_close_FP} implies 
\be*
 (n+1)e_{n+1}-ne_n 
    &\le &
    C d_1(\mu^{*,(n+2)},\mu^{*,(n+1)}) + C d_1(\mu^{*,(n+1)},\hat\mu^{(n+1)}) + \ell_n\;,
\ee*
where we also used that $\mu^{*,(n+1)}$ is the mean field flow  induced by the best response to the population distribution $\bar\mu^{(n)}$ so that 
\begin{align*}
    \int_{\cX^T} J_{n} d \mu^{*,(n+1)}
    &\geq
    \int_{\cX^T} J_{n} d \hat\mu^{(n+2)}.
\end{align*}

Finally, Lemma \ref{Lemma_close_FP} together with the continuity of the best response in the first order MFG {(which stems from Assumption~\ref{Assumption_reward}, see e.g.~\cite[Remark 3.5.3]{Hadikhanloo-phdthesis})} ensure that $d_1(\mu^{*,(n+2)},\mu^{*,(n+1)})$ converges to zero as $n$ goes to infinity. Finally, applying Lemma~\ref{lem:useful-bound-sequence} to 
$$
    \varphi_n = e_n, 
    \quad \hbox{ and } \quad
    \lambda_n =  \ell_{n} + C d_1(\mu^{*,(n+1)},\hat\mu^{(n+1)})
    + \frac{C}{n}
$$  directly concludes the proof.
 \end{proof}
\newpage 

\subsection*{C. Algorithms}

For sake of completeness, we detail here the pseudo code for important algorithms of the paper:  first exact fictitious play, then Deep Deterministic Policy Gradient.
 \begin{algorithm}[!h]
 \DontPrintSemicolon
\KwData{An initial distribution $\mu_0$; an initial mean field flow $\bar{\mu}^{*,(0)}$; a number of steps $N$.}
\KwResult{A mean field flow $\mu$ and a  policy $\pi$.}
\Begin{
  \For{$n = 0, 1, \dots N-1$}{
    Compute $\pi^{*,(n+1)}$, the best response policy against $\overline{\mu}^{*,(n)}$ (see Definition \ref{def:BR})\;
    Update  $\bar{\mu}^{*,(n+1)} =\frac{n}{n+1}{\bar \mu}^{*,(n)} + \frac{1}{n+1} \mu^{*,(n+1)}$  \;
    }
      \KwRet{$(\bar\mu^{*,(N)}, \pi^{*,(N)})$} 
  }
\caption{Exact Fictitious Play for MFG}
\label{algo:EFP-MFG-2}
\end{algorithm}
\label{DDPG-Appendix}
\begin{algorithm}[h!]
\DontPrintSemicolon
\KwData{Randomly initialize critic network $Q(x, a| \theta^Q)$ and actor network $\pi(x|\theta^\pi)$ with weights $\theta^Q$ and $\theta^\pi$ and initialize target network $Q'$ and actor network $\pi'$ with weights $\theta^{Q'} \leftarrow \theta^Q$ and $\theta^{\pi'} \leftarrow \theta^\pi$.}
\KwResult{a policy $\pi$.}
\Begin{
  \For{ episode $=0, 1, \dots M-1$}{
    Initialize replay buffer $R$ \;
    \For{$t=0, 1, \dots T-1$}{
    Select an action $a_t = \pi(x|\theta^\pi) + \mathcal{N}_t$ according to the current policy and exploration noise,\;
    Execute action $a_t$ and observe reward $r_t$ and new state $x_{t+1}$\;
    Store transition $(x_t, a_t, r_t, x_{t+1})$ in $R$ \;
    Sample a random minibatch of $N$ transitions $(x_i, a_i, r_i, x_{i+1})$ from $R$ \;
    Set $y_i = r_i + \gamma Q'(x_{i+1}, \pi'(x_{i+1}|\theta^{\pi'}))$ \;
    Update critic by minimizing the loss: $L(\theta^Q)=\frac{1}{N}\sum \limits_i (y_i - Q(x_i, a_i|\theta^Q))^2$ \;
    Update the actor policy using the sampled policy gradient: $\nabla_{\theta^\pi} J \simeq \frac{1}{N}\sum \limits_i \nabla_a Q(x,a|\theta^Q)|_{x=x_i, a=\pi(x_i)} \nabla_{\theta^\pi} \pi(x|\theta^\pi)|_{x=x_1}$ \;
    Update target networks:
    $$\theta^{Q'} \leftarrow \tau \theta^{Q} + (1-\tau)\theta^{Q'}$$
    $$\theta^{\pi'} \leftarrow \tau \theta^{\pi} + (1-\tau)\theta^{\pi'}$$
    }
    }
  \KwRet{$\pi$}
  }
\caption{DDPG}
\label{algo:AFP-MFG-DDPG}
\end{algorithm}
\subsection*{D. Approximation of the density}
The density estimation is done through classification. We divide a state dataset $X$ of size $N$ into classes representing a partitioning of the space $\left([\frac{i}{N_{\textrm{classes}}}, \frac{i+1}{N_{\textrm{classes}}}]\right)_{i\in\{0,..., N_{\textrm{classes}}-1\}}$. Then we use a function $f_{\theta} \in \{0\} \rightarrow \Delta_{N_{\textrm{classes}}}$ to estimate the density of $x \in X$ being in this interval $[\frac{i}{N_{\textrm{classes}}}, \frac{i+1}{N_{\textrm{classes}}}]$ by minimizing the cross entropy loss $L(\theta)=\frac{1}{N}\sum \limits_{i=1}^N \sum \limits_{j=1}^{N_{\textrm{classes}}} \mathds{1}_{x_i \in [\frac{j}{N_{\textrm{classes}}}, \frac{j+1}{N_{\textrm{classes}}}]} \log(f_{\theta}^j)$.

\newpage 
\subsection*{E. Additional numerical results}

 We provide in this Section  additional numerical results on the illustrative example presented in Section \ref{Section_Expe}. Figure \ref{tradeoff-experiment2} presents the ergodic continuous time theoretical optimal policy together with the numerically estimated optimal one for the  discrete time model, with parameters detailed in Section \ref{Section_Expe}.
\begin{figure*}[h!]
\centering
  \begin{tabular}{@{}cccc@{}}
     \includegraphics[width=.30\textwidth]{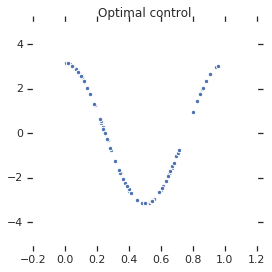} &
     \includegraphics[width=.30\textwidth]{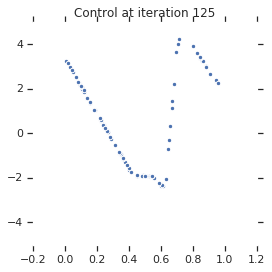} &
  \end{tabular}
  \vspace{-5.0mm}
  \caption{From left to right: 1) the continuous time explicit solution policy for $\gamma=1$, 2) the discrete time learned distribution of the policy with $\gamma=0.95$ and $\Delta_t = 0.05$.
  }
  \vspace{-5.0mm}
  \label{tradeoff-experiment2}
\end{figure*}
\end{document}